\def\namedlabel#1#2{\begingroup
    #2
    \def\@currentlabel{#2}
    \phantomsection\label{#1}\endgroup
}
\newtheorem{theorem}{Theorem}[]
\newtheorem{corollary}{Corollary}
\newtheorem{lemma}{Lemma}
\newtheorem{proposition}{Proposition}
\newtheorem{defi}[]{Definition}
\renewcommand{\geq}{\geqslant}
\renewcommand{\leq}{\leqslant}
\newcommand{\Ccal}{\mathcal{C}}
\newcommand{\R}{\mathbb{R}}
\newcommand{\C}{\mathbb{C}}
\newcommand{\Z}{\mathbb{Z}}
\newcommand{\NN}{\mathbb{N}}
\newcommand{\eps}{\varepsilon}
\renewcommand{\O}{\Omega}
\newcommand{\la}{\langle}
\newcommand{\ra}{\rangle}
\newcommand{\ac}{\mathring{a}}
\newcommand{\fc}{\mathring{f}}
\newcommand{\dd}{\, \mathrm{d}}
\newcommand{\bracenom}{\genfrac{\lbrace}{\rbrace}{0pt}{}}
\author[G. Ambrus]{Gergely Ambrus}\thanks{Corresponding author: Gergely Ambrus,
{\em Bolyai Institute, University of Szeged, Hungary, and Alfréd Rényi Institute of Mathematics,  Budapest, Hungary.} e-mail address: \texttt{ambrus@renyi.hu}\\[3 pt]
The article is based on original work, and it have not been published elsewhere in any form or language.}
\author[A. Csiszárik]{Adrián Csiszárik}
\author[M. Matolcsi]{Máté Matolcsi}
\author[D. Varga]{Dániel Varga}
\author[P. Zsámboki]{Pál Zsámboki}
\title{The density of planar sets avoiding unit distances}
\date{\today}
\subjclass[2020]{42B05, 52C10, 52C17, 90C05}
\keywords{Chromatic number of the plane, distance-avoiding sets,
  linear programming, harmonic analysis}
\begin{document}

\begin{abstract}
By improving upon previous estimates on a problem posed by L. Moser, we prove a conjecture of Erdős that the density of any measurable planar set avoiding unit distances is less than $1/4$. Our argument implies the upper bound of $0.2470$.
\end{abstract}

\maketitle

\section{Sets avoiding unit distances}

\label{sect_intro}

How `dense' can a set in $\R^d$ be, if it contains no pairs of points being unit-distance apart? This natural question, raised by Leo Moser in the early 1960s (see \cite{Cr67, Er85}), belongs to a topic known as the {\em combinatorics of unit distances in geometry}. It is a close relative of the famous Hadwiger--Nelson problem posed by Nelson in 1950,
who asked for the minimum number of colors that may be assigned to the points of  $\R^d$ so that points at distance 1 apart are colored differently. This question has enjoyed a surge of interest in the last years, triggered by the breakthrough result of de Grey \cite{deG18} showing that the plane is not 4-colorable, and has been further investigated by the PolyMath16 project \cite{PM22}.  Given a proper coloring, it is readily seen that each color class must avoid unit distances -- hence we arrive at the main object of the present paper.

To make the formulation precise, some definitions are due. A graph with vertices in $\R^d$ is called a {\em unit distance graph} if two vertices are adjacent if and only if  they are at Euclidean distance~1 apart. The {\em unit distance graph of $\R^d$} is obtained by taking all points of $\R^d$ as vertices, and  its chromatic number is denoted by $\chi(\R^d)$. That $\chi(\R^2) \leq 7$ is shown by a periodic coloring of cells of a hexagonal lattice, constructed by  Hadwiger~\cite{Ha45}. For over half a century, the best lower bound had remained $\chi(\R^2) \geq 4$. This is implied by a 7-vertex unit distance graph, the {\em Moser spindle} $M_7$ (Figure~\ref{fig_moser}), constructed by Leo and William Moser \cite{MoM61}, whose chromatic number is 4 (we note that the estimate was already known to Nelson, cf. \cite{So09}). Recently, de Grey~\cite{deG18} proved the breakthrough result $\chi(\R^2) \geq 5$. An alternate proof was given by Exoo and Ismailescu~\cite{ExI20}, while various simplifications and generalizations have been subject to the PolyMath16 project~\cite{PM22}.

\begin{figure}
  \centering
  \includegraphics[height=.25\linewidth]{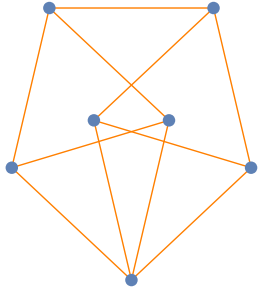}
\caption{The Moser spindle $M_7$ \cite{MoM61}}
\label{fig_moser}
\end{figure}

Imposing the restriction on the color classes to be measurable  leads to the notion of {\em measurable chromatic number of $\R^d$}, denoted by $\chi_m(\R^d)$. Falconer~\cite{Fa81} proved that $\chi_m(\R^d) \geq d+3$ for each $d \geq 2$ -- in particular, $\chi_m(\R^2) \geq 5$. Our result  provides an alternative proof to this statement. Moreover, it also yields that one cannot cover more than $0.988$ fraction of the plane with 4 colors.

A set $A \subset \R^d$ which is independent in the unit distance graph is said to {\em avoid unit distances}, or to be {\em $1$-avoiding} or {\em unit-distance free}.
Assume that $A\subset \R^d$ is measurable. The {\em upper density} of $A$, denoted by $\overline{\delta(A)}$, is defined by
\[
\overline{\delta(A)} = \limsup_{R \rightarrow \infty} \frac{\lambda_d(A \cap B_d(x, R))}{\lambda_d(B_d(x, R))}\,,
\]
where $x \in \R^d$ is fixed, $\lambda_d$ is the $d$-dimensional Lebesgue measure, and $B_d(x, R)$ denotes the $d$-dimensional ball of radius $R$ centered at $x$. The definition is valid since the upper density is known to be independent of the choice of $x\in \R^d$. In case the limit of the above quantity also exists, we call it the {\em density} of $A$, denoted by $\delta(A)$:
\[
\delta(A) = \lim_{R \rightarrow \infty} \frac{\lambda_d(A \cap B_d(x, R))}{\lambda_d(B_d(x, R))}.
\]
This is again independent of $x$.

Let $m_1(\R^d)$ denote the supremum of the upper densities of  unit-distance free, measurable sets in $\R^d$:
\begin{equation}\label{m1def}
m_1(\R^d) = \sup \{ \overline{\delta(A)} \, : \, A\subset \R^d \textrm{ is 1-avoiding and measurable}  \}.
\end{equation}
 The connection to measurable chromatic numbers is provided by the simple fact $\chi_m(\R^d) \geq 1/ m_1(\R^d)$.

 Determining $m_1(\R^2)$ was asked and popularized by Leo Moser \cite[Problem 25]{Mo66}.
 The easiest non-trivial lower bound is obtained by placing open circular discs of radius $1/2$ at the regular hexagonal lattice generated by two vectors of length $2$ enclosing angle $\pi/3$. This is a 1-avoiding set of density $\pi / (8 \sqrt{3}) \approx 0.2267$. A slight improvement was achieved by Croft~\cite{Cr67}. His construction (see Figure~\ref{fig_Croft}) is a lattice arrangement of a {\em tortoise}, that is, the intersection of an open disc of radius $1/2$ with an open regular hexagon of height $x<1$. Placing copies of this tortoise centred at each point of the regular hexagonal lattice with basis vectors of length $1+x$ results in a unit-distance free set. Its density proves to be maximal at $x=0.96553\ldots$ yielding $m_1(\R^2) \geq 0.22936$. 
 To this date, no better construction has been given.

\begin{figure}
\centering
\begin{subfigure}{.49\linewidth}
  \hspace{-30pt}
  \includegraphics[width=1.23\linewidth]{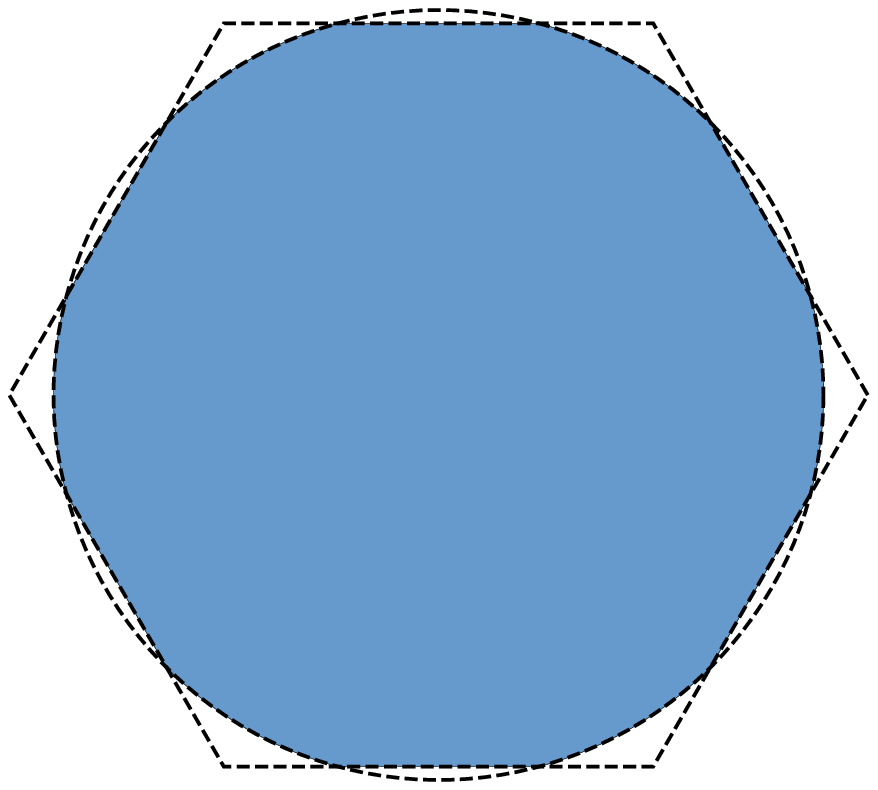}
\end{subfigure}
\begin{subfigure}{.49\linewidth}
  \hspace{-40pt}
  \includegraphics[width=1.23\linewidth]{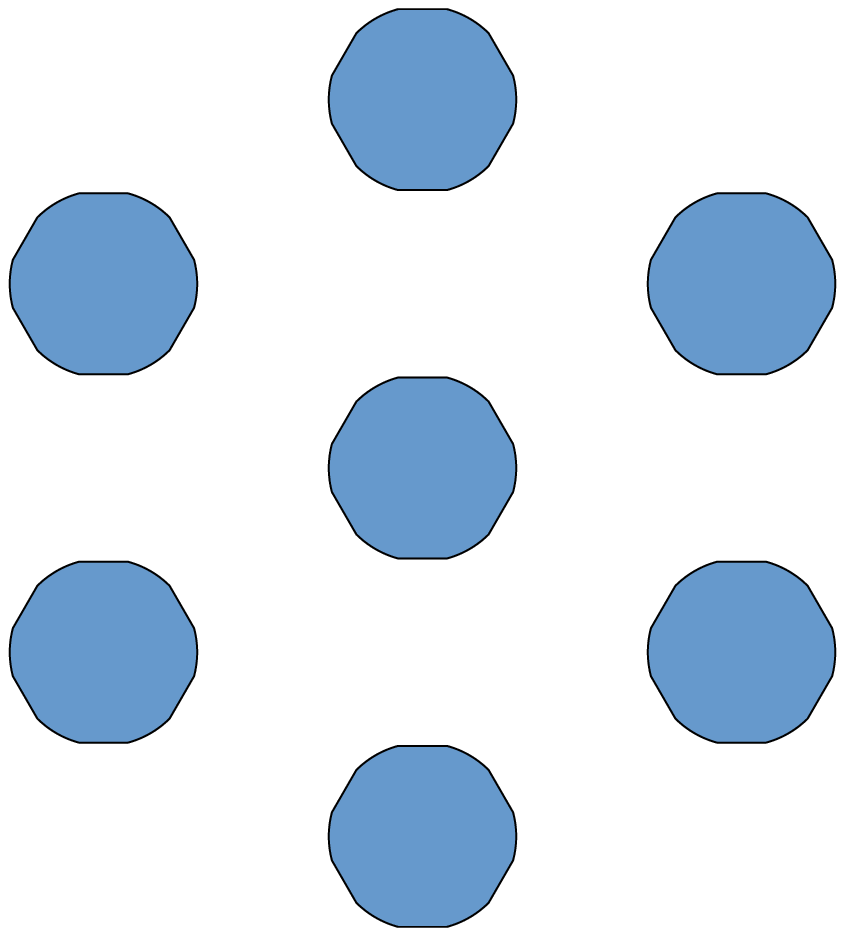}
\end{subfigure}
\caption{Croft's tortoise construction \cite{Cr67}}
\label{fig_Croft}
\end{figure}

 Larman and Rogers \cite{LaR72} formulated a variant of the question restricted to sets contained in the $d$-dimensional unit ball $B_d$: they conjectured that the Lebesgue measure of a 1-avoiding closed set $A \subset B_d$ is less than $1/2^d$ times the Lebesgue measure of the ball. According to their comment added in proof, `In fact this conjecture was made some years ago by Leo Moser.' 
If true, this would have implied that  $m_1(\R^d) \leq 1/2^d$ by averaging. However, the conjecture turned out to be false for all $d\ge 2$, as demonstrated by concrete examples in \cite{deOV19}.

Regarding the maximal density in the whole plane, 
P. Erdős wrote in \cite{Er85}: `It seems very likely that $m_1(\R^2)$ is less than $1/4$.' The main goal of this paper is to prove this conjecture. 

 Let $A$ be a planar, measurable  unit-distance free set with density close to $m_1(\R^2)$ (with a slight abuse of terminology we will assume that the density of $A$ is {\it equal} to $m_1(\R^2)$). The easiest non-trivial upper bound on $m_1(\R^2)$ is $1/2$, which is easily implied by the fact that given an arbitrary unit vector $u$, the sets $A$ and $A+u$ must be disjoint. Taking $u$ and $v$ to be the side vectors of a unit regular triangle emanating from a given vertex, we immediately deduce that $A$, $A + u$ and $A + v$ are also pairwise disjoint, hence $\delta(A) \leq 1/3$. These are the simplest instances of upper bounds based on {\em independence numbers}. Given a unit distance graph $G$, let $\alpha(G)$ denote its independence number. In notation, we sometimes identify $G$ with its vertex set $V(G)$, and simply write $v \in G$ instead of $v \in V(G)$. Consider the set of translates of $A$ by all location vectors of the vertices of $G$. Observe that any point $x \in \R^2$ may be covered by at most $\alpha(G)$ of these translates, since otherwise there would be two connected vertices $u,v \in G$ such that $x \in (A+u) \cap (A+v)$ --- but this latter is equivalent to the fact that both $x - u$ and $x - v$ are points of $A$, which is impossible since $A$ avoids unit distances. Hence, $m_1(\R^2) \leq \frac{\alpha(G)}{|G|}$. Taking $G$ to be the Moser spindle yields the upper bound of $2/7 \approx 0.2857$. 

 There are  two essentially different approaches for giving upper bounds on $m_1(\R^2)$. One direction, which generalizes the idea above, is to search for unit distance graphs with large {\em fractional chromatic number}, which is defined as follows (see e.g. \cite{CrR17}). Given a graph $G$,
we assign to each independent set in $G$ a non-negative weight, such that each
vertex appears in independent sets with weights summing to at least $1$. The
fractional chromatic number $\chi_f(G)$ is the minimum total sum of weights on the independent sets satisfying this condition (for an infinite graph $G$, take the supremum of $\chi_f(H)$, where $H \subset G$ is a finite subgraph). The formal definition will be given in Section~\ref{sect_fracchrom}.  For instance, the fractional chromatic number of the Moser spindle is $7/2$. The fractional chromatic number of the unit distance graph on $\R^2$, $\chi_f(\R^2)$, was first studied by Fisher and Ullmann~\cite{FiU92}, followed by a sequence of steadily increasing lower bounds. Clearly, $\chi_f(G) \leq \chi_f(\R^2)$ holds for any unit distance graph $G$ in the plane.

The crucial link between $m_1(\R^2)$ and $\chi_f(\R^2)$ is provided by the estimate $m_1(\R^2) \leq 1/\chi_f(\R^2)\leq1/\chi_f(G)$  for any unit distance graph $G$ in the plane (see e.g. \cite[Section 3.6]{SchU11} or Corollary~\ref{cor_fcn_bound} below).  In particular, substituting the Moser spindle $M_7$ for $G$ yields the bound $2/7$ we have seen above. In 2018, Bellitto, P\^{e}cher, and S\'{e}dillot \cite{BePS21} utilized this method to find a unit distance graph $G$ on 607 vertices with $\chi_f(G) \geq 3.8991$ which yields $m_1(\R^2) \leq 0.2565$. In 2022, J. Parts~\cite{Pa22} announced the existence of a graph $H$ on 1057 vertices with $\chi_f(H) \geq 3.9898$. If verified (and published), this would have yielded the strongest estimate on $m_1(\R^2)$ prior to the present work.

The other direction stems from a combination of harmonic analysis and linear optimization. The main idea here is to exploit the fact that the auto-correlation function of $A$ defined by $f(x) = \delta(A \cap (A-x))$ is (completely) positive definite. Clearly, $\delta(A) = f(0)$. This value is set as the target function of a linear program whose variables are the Fourier coefficients of $f(x)$, with constraints stemming from elementary combinatorial relations on small unit distance graphs or point sets.  The work of Székely~\cite{Sz84}, which yields an upper bound of $12/43 \approx 0.2791$, acts as a forerunner to this approach: he utilized estimates on $f(x)$ generated by three different point sets. In Section~\ref{sect_linprog} we are going to illustrate  how his estimates fit into our framework. The next step in this direction was taken by de Oliveira Filho and Vallentin~\cite{deOV10}, who --- applying constraints corresponding to three regular triangles --- improved the estimate to $0.2684$. Involving more subtle constraints, Keleti, Matolcsi, de Oliveira Filho and Ruzsa \cite{KeMOR16} showed that $m_1(\R^2) \leq 0.2588$. Finally, by refining the arguments further and including linear constraints generated by the study of triple correlations, Ambrus and Matolcsi~\cite{AmM22} proved that $m_1(\R^2) \leq 0.2544$, which, prior to the present paper,  has been the strongest published estimate. The advantage of this method is that the arising upper bound is guaranteed to converge to $m_1(\R^2)$ as more and more complete positivity constraints on $f$ are specified (see \cite{DeOV22} for details).  

Our method is a common generalization of the two approaches. We prove the following bound, which finally breaks the barrier of $1/4$.

\begin{theorem}\label{thm}
Any Lebesgue measurable, 1-avoiding planar set has upper density at most $0.2470$.
\end{theorem}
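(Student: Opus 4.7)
The plan is to combine the fractional chromatic number method with the harmonic-analytic linear programming framework into a single, more expressive linear program whose optimum forces $\delta(A) \leq 0.2470$. Let $A$ be a planar measurable 1-avoiding set with upper density equal to $m_1(\R^2)$, and form its autocorrelation $f(x) = \delta(A \cap (A-x))$. Standard properties give: $f$ is continuous, $f(0) = \delta(A)$, $f(x) = 0$ whenever $|x| = 1$, $0 \leq f(x) \leq f(0)$, and $f$ is of positive type, so by Bochner's theorem $f = \widehat{\mu}$ for a non-negative tempered measure $\mu$ on $\R^2$. Higher correlations $f_k(x_1, \ldots, x_k) = \delta\bigl(A \cap (A - x_1) \cap \cdots \cap (A - x_k)\bigr)$ are completely positive definite, so they also admit spectral representations that can be cut down to finitely many variables by symmetry and discretization.

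Next I would translate combinatorial data into linear constraints on $f$ in two parallel ways. For any finite unit distance graph $G$ with vertices $v_1, \ldots, v_n$ and independence number $\alpha(G)$, the pointwise bound $\sum_i \mathbf{1}_{A + v_i} \leq \alpha(G)$ yields scalar and pairwise inequalities of the form $\sum_{i,j} f(v_j - v_i) \leq \alpha(G)\, f(0)$, and analogous inequalities using $f_2$ and $f_3$ on triples and quadruples of vertices encode Székely-type and triple-correlation data in the spirit of \cite{KeMOR16, AmM22}. The new ingredient is to use fractional colorings: given a fractional coloring of $G$ by weighted independent sets $(I_k, w_k)$ realizing $\chi_f(G)$, the pointwise relation $\sum_k w_k \mathbf{1}_{\bigcup_{v \in I_k}(A + v)} \geq \sum_v \mathbf{1}_{A + v}$ integrated against translates of $A$, together with inclusion--exclusion, produces linear inequalities on the values $f(v_j - v_i)$ that are strictly stronger than the scalar bound $f(0) \leq 1/\chi_f(G)$. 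This is the mechanism by which the fractional chromatic method is absorbed into the LP, rather than used as a competing approach.

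Third, I would assemble the LP itself: the target is $f(0)$; the variables are either a discretized, radially averaged spectral density for $\mu$ on a finite frequency grid, or the values of $f$ on a finite point set together with a semidefinite correlation matrix; and the constraints are positive semidefiniteness of this matrix, the vanishing condition on a dense sample of the unit circle, and the two families of combinatorial inequalities above applied to carefully chosen unit distance graphs --- in particular, the Moser spindle $M_7$, iterated spindles, regular-triangle configurations needed for Székely's and de Oliveira Filho--Vallentin's estimates, and high-$\chi_f$ graphs in the spirit of the $607$-vertex graph of \cite{BePS21} and the $1057$-vertex graph announced in \cite{Pa22}. Solving this LP numerically and reading off a dual solution gives an explicit non-negative combination of the constraints whose value at $x = 0$ upper bounds $f(0)$; the task is to exhibit one such combination which evaluates to at most $0.2470$.

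The main obstacle will be the design and rigorous certification of this LP. The harmonic and fractional chromatic constraints live on different algebraic structures, and one must choose a finite set of frequencies (or correlation distances) simultaneously rich enough to beat $1/4$ and small enough to solve. Moreover, the target bound $0.2470$ lies only $0.003$ below $1/4$, so any floating-point slack in the numerical solution risks swallowing the improvement; the dual certificate therefore must be post-processed into exact rational or interval-arithmetic form and the corresponding pointwise inequality verified symbolically before the bound can be claimed rigorously.
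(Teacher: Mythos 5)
Your high-level strategy --- merging the fractional-chromatic and the Fourier/linear-programming approaches into a single LP, solving it numerically, and certifying a dual solution with interval arithmetic --- is indeed the paper's strategy, and your remarks on certification match its Section~\ref{sect_verification}. But two concrete ideas that make the merger actually work are missing, and the mechanism you propose in their place does not hold up. First, the paper does not feed a pre-computed fractional coloring into the LP. Instead, for a single finite point set $X=\{x_1,\dots,x_n\}$ it introduces all $2^n$ \emph{atoms} $a_X(\eps)=\delta\big(\bigcap_i (A-x_i)^{\eps_i}\big)$ of the Venn diagram of the translates $A-x_i$ (complete inclusion--exclusion). These nonnegative variables sum to $1$, vanish when the corresponding index set is not independent, have marginal sums equal to $\delta(A)$, and --- crucially --- have pairwise sums equal to $f(x_i-x_j)$, which is exactly what couples the combinatorial side to the Fourier variables $\kappa(t)$ via $\Omega_2(t\abs{x_i-x_j})$. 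The fractional chromatic bound is then a \emph{consequence} of this LP (Corollary~\ref{cor_fcn_bound}), not an input. Your proposed pointwise inequality $\sum_k w_k\mathbf{1}_{\bigcup_{v\in I_k}(A+v)}\ge \sum_v\mathbf{1}_{A+v}$ is false in general (already for two vertices at a non-unit distance both translates can cover a point while only one weighted union does), and even a corrected version of this argument only reproduces $f(0)\le 1/\chi_f(G)$; since every known planar unit-distance graph has $\chi_f(G)<4$, no input of this kind can by itself push the bound below $1/4$.

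Second, and decisively, you omit the averaging over the orthogonal group $O(2)$. The paper replaces the atoms by their rotational averages $\ac_X(\eps)$ and thereby gains the congruence constraints \eqref{ieC}: congruent subsets $X|_I\cong X|_J$ of $X$ force equal atom sums. This yields the ``geometric fractional chromatic number'' of Definition~\ref{def_GFCN} and Corollary~\ref{cor_gfcn}, and it is where almost all of the strength of the final certificate lies (5868 of the 6099 constraints, and 2321 of the 2350 nonzero dual coefficients, are of congruence type). Finally, your plan to invoke the 607- or 1057-vertex high-$\chi_f$ graphs is computationally incompatible with complete inclusion--exclusion, whose variable count is exponential in $\abs{X}$; the paper instead runs a beam search over point sets of at most 24 points grown from the Moser spindle, scoring candidates by the LP value itself, and lands on a 23-point set. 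Without the atom decomposition and the $O(2)$-averaged congruence constraints, your LP has no identified route to $0.2470$.
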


We conclude the section by listing a number of related results.
The general characterization of the maximum-density of distance-avoiding sets as the optimal solutions of a convex optimization problem was established by DeCorte, de Oliveira Filho and Vallentin~\cite{DeOV22}. It was proven in \cite{KeMOR16} that the density of a planar unit-distance avoiding set {\em with block structure} is less than  $1/4$. We do not list higher dimensional estimates here (see e.g. \cite{BaPT15, deOV10}).
Further historical details related to unit distance problems may be found in the books of Brass, Moser and Pach~\cite{BrMP05} and Soifer~\cite{So09} as well as the survey article of Székely~\cite{Sz02}.

\section{Inclusion-exclusion constraints}

\label{sect_IE}

Starting with the work of Székely~\cite{Sz84}, a main source of estimates on $m_1(\R^2)$ has been the {\em autocorrelation function} of sets avoiding unit distances. This is defined as follows. Let $A \subset \R^2$ be measurable. The autocorrelation function $f: \R^2 \rightarrow \R$ of $A$ is given (assuming that the densities in question exist) by
\begin{equation}\label{autof}
f(x) = \delta (A \cap (A-x)).
\end{equation}
Clearly, $\delta(A) = f(0)$, and $A$ being unit-distance free implies the condition that
\begin{equation}\label{f_unitdistavoid}
f(x)=0 \textrm{ for all unit vectors }x \in \R^2.
\end{equation}
In order to use Fourier analytic tools we assume that $A$ is periodic with respect to a lattice $L \subset \R^2$, i.e. $A = A+L$. Under this assumption, the density of $A$ exists provided it is measurable. Moreover, $m_1(\R^2)$ may be approximated arbitrarily well by densities of 1-avoiding, measurable, periodic sets  \cite{deOV10, KeMOR16}. Therefore, we may indeed impose the periodicity assumption on $A$ with no effect on the density estimates on $m_1(\R^2)$. 

The estimates of \cite{Sz84, deOV10, KeMOR16, AmM22} essentially relied on elementary density estimates stemming from planar point sets and unit distance graphs of steadily increasing complexity. Roughly speaking, these are derived from a reduced version of the inclusion-exclusion principle (sometimes in disguise). We utilize the {\em complete} inclusion-exclusion formula, which induces no  loss in the density estimates. This idea was first suggested in \cite[Remark 3.3]{KeMOR16}, but has not been explored since. As we will see in Corollary~\ref{cor_fcn_bound}, this leads to a reformulation of the bound coming from the fractional chromatic number.

\medskip
 For a positive integer $n$, write $[n] = \{ 1, \ldots, n\}$ and $\bracenom{n}{2} = \{\{i,j\}: \  i,j \in [n], i \neq j\} $. Let
 $\sigma(n) = \{ \pm 1 \}^n$, the set of $n$-tuples of signs of the form $\eps = ( \eps_1, \ldots, \eps_n )$. For a given $\eps \in \sigma(n)$, introduce
 \[
 I(\eps) = \{ i \in [n]:\  \eps_i = 1 \}.
 \]
  Moreover, for any $I \subset [n]$, let
\[
\sigma(n; I) = \{\eps \in \sigma(n):\ I \subset I(\eps) \} = \{\eps \in \sigma(n):\ \eps|_I = 1 \}.
\]
When the cardinality of $I$ is at most 2, we simply write $\sigma(n; i) := \sigma(n; \{i\})$ and $\sigma(n; i,j) := \sigma(n; \{i,j\})$.

For a set $Y \subset \R^2$ and $\nu \in \{ \pm 1 \}$ introduce the notation
\begin{equation}
Y^\nu=\begin{cases}
Y \textrm{, if } \nu=1 \\
Y^c \textrm{, if } \nu=-1
\end{cases}
\end{equation}
where $Y^c = \R^2 \setminus Y$.

Now, let $X = \{ x_1, \ldots, x_n\} \subset \R^2$. For each $\eps\in \sigma(n)$, set
\[
I_X(\eps) = \{ x_i : \  \eps_i =1 \}
\]
and
\begin{equation}\label{ax_def}
a_X(\eps) = \delta \Big( \bigcap_{i=1}^n (A - x_i)^{\eps_i} \Big).
\end{equation}

The numbers $\{a_X(\eps): \eps \in \sigma(n)\}$ are the densities of `atoms' (i.e. components) of the Venn diagram of the set system $\{ A - x_i : \ i \in [n] \}$; that is, for any non-empty $I \subset [n]$,
\begin{equation}\label{delta_restrict}
\delta\Big( \bigcap_{i \in I} (A - x_i) \Big) = \sum_{\eps \in \sigma(n; I)} a_X(\eps).
\end{equation}

Note that since $A$ avoids unit distances, 
\begin{equation*}
\label{aeps0}
    a_X(\eps) =0 \text{ if } I_X(\eps) \text{ is not an independent set in the unit distance graph. } 
\end{equation*}
These atoms will be referred to as {\em non-independent}, while the others are {\em independent atoms.}

\begin{lemma}[Inclusion-exclusion constraints]\label{lemma1}
Let $f$ be the autocorrelation function of a measurable, periodic, 1-avoiding set $A \subset \R^2$, as defined in \eqref{autof}. Furthermore, let $X = \{ x_1, \ldots, x_n\} \subset \R^2$. Then the set of reals $\{ a_X(\eps) : \eps \in \sigma(n) \}$ defined by \eqref{ax_def} satisfy the following properties:
\medskip
\begin{enumerate}
 \setlength\itemsep{1em}
 \item[\namedlabel{ieP}{\rm{(ieP)}}] $a_X(\eps) \geq 0$ for each $\eps \in \sigma(n) $
 \item[\namedlabel{ieI}{\rm{(ieI)}}] $a_X(\eps)= 0$ for each $\eps \in \sigma(n) $ such that $I_X(\eps)$ contains two points at unit distance
 \item[\namedlabel{ieT}{\rm{(ieT)}}] $ \sum_{\eps \in \sigma(n)} a_X(\eps) =1 $
 \item[\namedlabel{ie1}{\rm{(ie1)}}] $\sum_{\eps \in \sigma(n;i) }  a_X(\eps) = f(0)=\delta(A)$ for every $i \in [n]$
 \item[\namedlabel{ie2}{\rm{(ie2)}}] $\sum_{\eps \in \sigma(n;i,j) }  a_X(\eps) = f(x_i - x_j)$ for every $\{i,j\} \in \bracenom{n}{2}$.
\end{enumerate}
\medskip
\end{lemma}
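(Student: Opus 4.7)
The plan is to verify each of the five properties essentially by unwinding definitions, relying on translation-invariance of the density $\delta$ on periodic sets (which is guaranteed since $A$ is assumed periodic) together with the decomposition identity \eqref{delta_restrict} already stated above.

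I would dispose of the first three properties quickly. For (ieP), the quantity $a_X(\eps) = \delta\bigl( \bigcap_i (A-x_i)^{\eps_i} \bigr)$ is the density of a measurable set, hence nonnegative. For (ieI), suppose $x_i, x_j \in I_X(\eps)$ with $|x_i - x_j| = 1$; then $\bigcap_k (A-x_k)^{\eps_k} \subset (A-x_i) \cap (A-x_j)$, and any point $y$ in this latter intersection would give $y + x_i, y + x_j \in A$ at unit distance, contradicting the 1-avoiding property. For (ieT), I would note that the $2^n$ sets indexed by $\sigma(n)$ form a measurable partition of $\R^2$ (each $y$ lies in exactly one, determined by whether $y \in A - x_i$ for each $i$), so finite additivity of the density gives $\sum_\eps a_X(\eps) = \delta(\R^2) = 1$.

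For (ie1) and (ie2), the strategy is to apply \eqref{delta_restrict} and then rewrite the right-hand side in terms of $f$. In (ie1), the identity yields $\sum_{\eps \in \sigma(n;i)} a_X(\eps) = \delta(A - x_i) = \delta(A) = f(0)$ by translation-invariance. In (ie2), it yields $\delta((A-x_i) \cap (A-x_j))$; translating by $x_i$ gives $\delta(A \cap (A - (x_j - x_i))) = f(x_j - x_i)$, after which it remains to observe that $f$ is even. This last fact follows from another translation: $\delta(A \cap (A - x)) = \delta((A+x) \cap A) = f(-x)$.

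Honestly, I do not expect a genuine obstacle here, as the lemma is essentially bookkeeping given \eqref{delta_restrict} and the hypotheses on $A$. The only point requiring a moment of care is the validity of finite additivity and translation-invariance of $\delta$ on the finitely many measurable sets that appear, which is precisely what the periodicity assumption on $A$ provides.
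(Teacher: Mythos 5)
Your proof is correct and follows essentially the same route as the paper: it invokes nonnegativity and finite additivity of density for the first three items, and then uses \eqref{delta_restrict} plus translation invariance of $\delta$ for (ie1) and (ie2). The only minor divergence is in (ie2), where you translate by $x_i$ and then appeal to evenness of $f$; the paper instead translates by $x_j$, arriving at $\delta\bigl((A-(x_i-x_j))\cap A\bigr)=f(x_i-x_j)$ directly, but both are equally valid.
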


\begin{proof}
Properties (ieP), (ieI) and (ieT) hold trivially. For (ie1) and (ie2), note that by \eqref{delta_restrict}, the sums on the left-hand side are $\delta(A - x_i)$ and $\delta((A - x_i)\cap (A - x_j))$, respectively; by the invariance of the density of a set by translations we readily obtain that these further equal to $\delta(A)$ and $\delta((A - (x_i - x_j))\cap A)$.
\end{proof}

We can give an upper bound on $\delta(A)$ by maximizing $f(0)$ on the feasible set defined by (ieP), (ieI), (ieT), (ie1). Equivalently, after dividing all variables by $\delta(A)$, that is, defining $\tilde{a}_X(\eps) := a_X(\eps) / \delta(A)$, we can get a lower bound on $1/\delta(A)$ for any unit distance avoiding set $A$ by solving the following linear program in the variables $\tilde{a}_X(\eps)$:

\begin{equation}
\begin{aligned}\label{eq_FCN_LP}
\text{minimize}  \sum_{\eps \in \sigma(n)} &\tilde{a}_X(\eps) \text{ \ subject to:} \\
 &\tilde{a}_X(\eps) \geq 0 \text{ for each } \eps \in \sigma(n) ;\\
 &\tilde{a}_X(\eps) = 0 \text{ for each } \eps \in \sigma(n) \text{ for which } I_X(\eps) \text{ is not independent};\\ 
 &\sum_{\eps \in \sigma(n;i) }  \tilde{a}_X(\eps) = 1 \text{\ for every\ } i \in [n].
\end{aligned}
\end{equation}

As we will see in the next section, this is equivalent to the bound based on fractional chromatic numbers. By also utilizing (ie2), we will go beyond fractional chromatic numbers, and build a connection between the two different approaches used to estimate $m_1(\R^2)$ described in Section~\ref{sect_intro}.

\section{Fractional chromatic numbers}
\label{sect_fracchrom}

Recall the definition of the fractional chromatic number of a finite graph $G$ defined on a vertex set $X$ (for a detailed history see \cite{CrR17}). Denote by $\mathcal {I}(G)$ the set of all independent sets of $G$, and let $\mathcal {I}(G,x)$ be the set of all those independent sets which include the vertex $x \in X$. We will assign non-negative weights $\gamma(S)$ to each $S\in \mathcal{I}(G)$, and for technical reasons we let $\gamma(S)=0$ for any $S\subset X, S\notin \mathcal{I}(G)$.

\begin{defi}\label{def_FCN}
Let $G$ be a finite graph on a vertex set $X$. The {\em fractional chromatic number} of $G$, denoted by $\chi_f(G)$, is defined as 
\begin{equation} \label{agi}
\chi_f(G)=\min \sum _{S\subset X} \gamma(S)
\end{equation}
subject to $\gamma(S) \geq 0$ for every $S\in {\mathcal {I}}(G)$, $\gamma(S) = 0$ for every $S\notin {\mathcal {I}}(G)$ and 
\begin{equation} \label{agix}
\sum_{S\in\mathcal{I}(G,x)} \gamma(S) \geq 1
\end{equation}
for every vertex $x \in X$. 
\end{defi}

Note that restricting the variables $\gamma(S)$ to be integers leads to the notion of the (classical) chromatic number of $G$. Also, for an infinite graph $G'$ it is customary to define $\chi_f(G')=\sup\{\chi_f(G): \ G\subset G', G \text{ \ is finite}$\}. 

\medskip

We next show that equality may be required to hold in \eqref{agix}.

\begin{lemma} \label{lemma_fcn}
For a finite graph $G$, the value of $\chi_f(G)$ is also given as the solution of the modified linear program of Definition~\ref{def_FCN} obtained by  replacing \eqref{agix} with 
\begin{equation} \label{agixeq}
\sum_{S\in\mathcal{I}(G,x)} \gamma(S) = 1
\end{equation}
for each $x \in G$.
\end{lemma}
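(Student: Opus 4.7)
My plan is to prove the two LPs have the same optimal value by showing that any feasible point of the inequality LP can be transformed into a feasible point of the equality LP with the same objective value, via a local redistribution of weights. Since replacing $\geq$ by $=$ only shrinks the feasible region, the modified LP trivially satisfies $\chi_f^{\textrm{eq}}(G)\geq\chi_f(G)$, so only the reverse direction needs work.

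\smallskip

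The redistribution step is as follows. Suppose $\gamma$ is feasible for the original LP and there exists some vertex $x\in X$ with strict inequality, i.e.\ $\sum_{S\in\mathcal{I}(G,x)}\gamma(S)=1+e_x$ for some $e_x>0$. Pick any $S_0 \in \mathcal{I}(G,x)$ with $\gamma(S_0)>0$, set $\eta := \min(\gamma(S_0), e_x)$, and define a new weighting $\gamma'$ by
\[
\gamma'(S_0) := \gamma(S_0)-\eta, \qquad \gamma'(S_0\setminus\{x\}) := \gamma(S_0\setminus\{x\})+\eta,
\]
leaving $\gamma'$ equal to $\gamma$ on every other set. The set $S_0\setminus\{x\}$ is a subset of the independent set $S_0$, hence itself independent, so $\gamma'$ respects the support constraint $\gamma'(T)=0$ for $T\notin\mathcal{I}(G)$. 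The total objective $\sum_S\gamma'(S)=\sum_S\gamma(S)$ is unchanged. For any vertex $y\ne x$, the set $S_0$ contains $y$ if and only if $S_0\setminus\{x\}$ does, so $\sum_{S\in\mathcal{I}(G,y)}\gamma'(S)=\sum_{S\in\mathcal{I}(G,y)}\gamma(S)\ge 1$, and for $y=x$ the left-hand sum drops by exactly $\eta\le e_x$, so it remains $\geq 1$.

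\smallskip

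Iterating this operation, first exhausting all the excess at $x$ (which takes finitely many steps since each iteration reduces the support at $x$ by one set or drives the excess to zero), then moving on to the next vertex, terminates after finitely many rounds because once the constraint at a vertex $x$ is made tight, later redistributions—which only move weight from a set $S\ni y$ to $S\setminus\{y\}$ for some other vertex $y$—do not change $\sum_{S\in\mathcal{I}(G,x)}\gamma(S)$, as observed above. The final weighting $\gamma^*$ is feasible for the equality LP and has the same objective value as $\gamma$. Applied to an optimal $\gamma$ of the original LP, this yields $\chi_f^{\textrm{eq}}(G)\leq\chi_f(G)$, completing the proof.

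\smallskip

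I do not anticipate a significant obstacle: the only point requiring care is that the subset $S_0\setminus\{x\}$ of an independent set is again independent, so the redistribution stays within the support of the LP; the monotonicity of independence under taking subsets is exactly what makes the move legal. The argument also makes clear why the two LPs coincide conceptually: weight placed on an independent set $S$ in excess of what is needed to cover its vertices can always be pushed down to subsets of $S$ at no cost.
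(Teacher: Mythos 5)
Your proof is correct and rests on exactly the same redistribution step as the paper's --- shifting the excess weight from a positively weighted $S_0\ni x$ to $S_0\setminus\{x\}$ --- with the paper organizing it as a contradiction against an optimal solution minimizing the total slack $\sum_{x}\bigl(-1+\sum_{S\in\mathcal{I}(G,x)}\gamma(S)\bigr)$, while you run the redistribution as an explicit terminating iteration. The one place where the paper is more careful is that it first argues $\gamma(\{x\})\leq 1$ (using optimality), which guarantees a choice of $S_0\neq\{x\}$; your version implicitly allows $S_0=\{x\}$ and hence pushes weight onto $\emptyset$, which is harmless under the usual convention that the empty set is independent but is worth excluding explicitly.
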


\begin{proof}
Take a weight system $\{ \gamma(S): \ S \in \mathcal{I(G)}  \}$ which constitutes a solution of \eqref{agi}, moreover, it minimizes 
\begin{equation}\label{totalsumagi}
\sum_{x \in X} \left(-1+\sum_{S\in\mathcal{I}(G,x)} \gamma(S)\right).
\end{equation}
By \eqref{agix}, the above sum is guaranteed to be non-negative; we are going to show that it equals 0, which suffices for the proof of Lemma~\ref{lemma_fcn}. 

Assume on the contrary that for a vertex $x \in G$, $\sum_{S\in\mathcal{I}(G,x)} \gamma(S) = 1+\nu$ holds with $\nu >0$. Note that the minimality condition \eqref{agi} and condition \eqref{agix} ensure that $\gamma(\{x\}) \leq 1$. Hence, there exists an independent set $S_0 \in \mathcal{I}(G)$ with $x \in S_0$, $S_0 \neq \{x\}$ and $\gamma(S_0)>0$. Introduce $\mu = \min\{ \nu, \gamma(S_0) \}$. 
Define a modified weight system $\widetilde{\gamma}$ by setting $\widetilde{\gamma}(S_0)=\gamma(S_0) - \mu$, $\widetilde{\gamma}(S_0 \setminus \{x\})= \gamma(S_0 \setminus \{x\}) + \mu $, and leaving the other weights unchanged. Note that 
\[
\sum_{S\in\mathcal{I}(G)} \widetilde{\gamma}(S) = \sum_{S\in\mathcal{I}(G)} \gamma(S),
\]
and 
\[
\sum_{S\in\mathcal{I}(G,y)} \widetilde{\gamma}(S) = 
\sum_{S\in\mathcal{I}(G,y)} \gamma(S) 
\]
holds for every $y \in G$, $y \neq x$, while
\[
\sum_{S\in\mathcal{I}(G,x)} \widetilde{\gamma}(S) =- \mu+ \sum_{S\in\mathcal{I}(G,x)} \gamma(S)  \geq - \nu+\sum_{S\in\mathcal{I}(G,x)} \gamma(S)  =1 .
\]
Thus, $\sum_{S\in\mathcal{I}(G,x)} \widetilde{\gamma}(S) < \sum_{S\in\mathcal{I}(G,x)} \gamma(S) $  contradicts the minimality of \eqref{totalsumagi}.
\end{proof}

As a direct consequence we obtain the following (cf. \cite[Section 3.6]{SchU11}):

\begin{corollary} \label{cor_fcn_bound}
For any measurable, periodic 1-avoiding set $A\subset \R^2$, and any finite unit-distance graph $G$ in the plane, we have  $1/\delta(A) \geq \chi_f(G)$.  Therefore $m_1(\R^2) \leq 1/\chi_f(G)$  and, taking supremum over $G$,  $m_1(\R^2) \leq 1/\chi_f(\R^2)$. 
\end{corollary}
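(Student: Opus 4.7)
The plan is to recognize that the linear program \eqref{eq_FCN_LP} derived from the inclusion–exclusion constraints is, after a straightforward relabelling of variables, precisely the fractional-chromatic-number linear program of Definition~\ref{def_FCN}, modified as allowed by Lemma~\ref{lemma_fcn} to have equality constraints. The desired inequality $1/\delta(A) \geq \chi_f(G)$ will then follow from weak LP duality — more concretely, from the fact that the normalized atomic densities produce a feasible solution of \eqref{eq_FCN_LP} whose objective value is exactly $1/\delta(A)$.

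First I would fix a measurable, periodic, 1-avoiding $A\subset\R^2$ with $\delta(A)>0$ and a finite unit distance graph $G$ with vertex set $X=\{x_1,\dots,x_n\}$. By Lemma~\ref{lemma1} the densities $a_X(\eps)$ defined in \eqref{ax_def} satisfy the constraints (ieP)–(ie1), and hence the rescaled quantities $\tilde a_X(\eps) := a_X(\eps)/\delta(A)$ form a feasible point of the linear program \eqref{eq_FCN_LP}. Using (ieT) I would then compute the objective value
\[
\sum_{\eps\in\sigma(n)}\tilde a_X(\eps) \;=\; \frac{1}{\delta(A)}\sum_{\eps\in\sigma(n)} a_X(\eps) \;=\; \frac{1}{\delta(A)}.
\]
Thus it suffices to show that the minimum of \eqref{eq_FCN_LP} is equal to $\chi_f(G)$.

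Next I would establish this equality by exhibiting an explicit correspondence between feasible points of \eqref{eq_FCN_LP} and those of the equality-constrained fractional-chromatic LP from Lemma~\ref{lemma_fcn}. The key observation is that the non-vanishing variables $\tilde a_X(\eps)$ are precisely those indexed by sign patterns $\eps$ for which $I_X(\eps)\subset X$ is an independent set of $G$; assigning
\[
\gamma\bigl(I_X(\eps)\bigr) \;:=\; \tilde a_X(\eps)
\]
(and $\gamma(S)=0$ for any set $S$ not of the form $I_X(\eps)$ with independent $I_X(\eps)$) gives a bijection between the two feasible regions. Under this bijection, the constraint $\sum_{\eps\in\sigma(n;i)}\tilde a_X(\eps)=1$ translates exactly to $\sum_{S\in\mathcal I(G,x_i)}\gamma(S)=1$, and the objective $\sum_\eps \tilde a_X(\eps)$ translates to $\sum_S \gamma(S)$. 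By Lemma~\ref{lemma_fcn}, the minimum of the equality-constrained LP equals $\chi_f(G)$, so the same holds for \eqref{eq_FCN_LP}. Combining with the previous paragraph yields $1/\delta(A)\geq \chi_f(G)$.

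Finally I would pass from this pointwise inequality to the stated supremum. Since $m_1(\R^2)$ can be approximated arbitrarily closely by densities of measurable, periodic 1-avoiding sets (as recalled at the start of Section~\ref{sect_IE}), taking $\delta(A)\to m_1(\R^2)$ gives $m_1(\R^2)\leq 1/\chi_f(G)$ for every finite unit distance graph $G$, and taking the supremum over $G$ produces $m_1(\R^2)\leq 1/\chi_f(\R^2)$ by the extension of $\chi_f$ to infinite graphs. There is no real obstacle in this argument — the only subtle point is the bookkeeping needed to verify that the bijection $\eps\mapsto I_X(\eps)$ is well defined on the support of a minimizer, which is immediate once one observes that constraint (ieI)/the second line of \eqref{eq_FCN_LP} forces the support of $\tilde a_X$ to sit inside $\mathcal I(G)$.
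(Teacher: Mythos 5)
Your proposal is correct and follows essentially the same route as the paper: the paper's proof also identifies the equality-constrained fractional-chromatic LP of Lemma~\ref{lemma_fcn} with \eqref{eq_FCN_LP} via the correspondence $\tilde a_X(\eps)=\gamma(I_X(\eps))$, and then observes that the rescaled atoms $a_X(\eps)/\delta(A)$ furnish a feasible point of \eqref{eq_FCN_LP} with objective value $1/\delta(A)$. You simply spell out the feasibility step and the bijection $\eps\mapsto I_X(\eps)$ in more detail than the paper does.
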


\begin{proof}
Let $G$ be a finite unit distance graph in the plane with vertex set $X$. When calculating the fractional chromatic number of $G$ , we can assume equality in \eqref{agix}, by Lemma~\ref{lemma_fcn}. This leads to the same  linear program as in \eqref{eq_FCN_LP}, with the correspondence between the variables of \eqref{agix} and \eqref{eq_FCN_LP} being given as $\tilde{a}_X(\eps) = \gamma(I_X(\eps))$.

\medskip

As the linear program \eqref{eq_FCN_LP} gives a lower bound on $1/\delta(A)$, we get  $1/\delta(A) \geq \chi_f(G)$.
\end{proof}

\section{Averaging}
\label{sect_averaging}

We will apply a standard averaging technique, which proves to be a key step. Recall that $A$ is a measurable, periodic, 1-avoiding planar set, that we keep fixed from now on (also meaning that $A$ is not varied by the isometries studied below). As before, let $X = \{ x_1, \ldots, x_n\} \subset \R^2$ be a finite set in the plane. Note that the relations of Lemma~\ref{lemma1} remain valid for every image of the subset $X$ under an isometry $\varphi$ of $\R^2$ with the values of  $a_{\varphi(X)}(\eps)$ being provided by \eqref{ax_def}. In particular, we may take the average of these relations over the orthogonal group $O(2)$ of the plane. Let $\mu$ denote the Haar probability measure on $O(2)$, and introduce the notations
\begin{equation}\label{a_av}
\ac_X(\eps) = \int_{O(2)}  \delta \Big( \bigcap_{i=1}^n (A - \varphi(x_i))^{\eps_i} \Big) \dd\mu(\varphi)
\end{equation}
for $\eps \in \sigma(|X|)$ and
\begin{equation*}\label{f_av}
\fc(x) = \int_{O(2)}  \delta \big( A \cap  (A - \varphi(x))\big) \dd\mu(\varphi)
\end{equation*}
for $x \in \R^2$. Note that
\begin{equation}\label{f_av2}
  \mathring{f}(x) = \frac 1 {2 \pi} \int_{S^1} f(\xi |x|) \dd \omega(\xi),
\end{equation}
where $\omega$ is the perimeter measure on the unit circle $S^1$. However, this does not generally hold for $\ac_X(\eps)$ since reflections also have to be taken into account.

This averaging process has two-fold benefit.  First, $\mathring{f}$ is radial (i.e. it depends only on $|x|$). Second, the averaging of relation ~\eqref{delta_restrict} leads to an additional set of natural constraints on $\ac_X(\eps)$. Indeed, let us introduce the notation $Y \cong Y'$, whenever $Y, Y'\subset X$ are congruent subsets of $X$. For an index set $I\subset [n]$ let  $X|_I = \{ x_i: \ i \in I \}$, and introduce
\[
\Ccal(X) = \big\{ \{ I, J\}: I, J \subset [n], \ I \neq J, \ X|_I \cong X|_J\big\}.
\]
Whenever $X|_I \cong X|_J$, there exist some orthogonal transformation $\varphi$ and a translation $\tau$ such that $X|_J=\varphi(\tau(X|_I))$. By equation \eqref{ax_def} we have   $a_X(\varepsilon)=a_{\tau(X)}(\varepsilon)$, and the averaging of equation  ~\eqref{delta_restrict}  over $\varphi$ leads to the constraint
\medskip
\begin{equation}\label{ieC}
\tag{ieC}
\sum_{\eps \in \sigma(n; I) }  \ac_X(\eps) =  \sum_{\eps \in \sigma(n; J) }  \ac_X(\eps) \textrm{ for every } \{I, J\} \in  \Ccal(X).
\end{equation}

\medskip

{\em Remark.} This averaging process, somewhat counter-intuitively,  does not result in any loss in the estimates. We do not give a rigorous proof of this fact (as we do not need it in the sequel), but only a heuristic argument, as follows. Partition the plane into large squares of size $Q\times Q$, $S(i,j)=[Qi, Q(i+1)]\times [Qj, Q(j+1)]$, where $Q$ is a huge number.  For each $(i,j)$ select an orthogonal transformation $\varphi$ at random (uniformly with respect to the Haar measure), and copy $\varphi(A)\cap S(0,0)$ into the square $S(i,j)$. Finally, delete a strip of sidelength 1 at the boundaries of the squares to make sure that no unit distances are created at the boundaries. The resulting set will have density close to that of $A$ (because the deletions are negligible), and the density correlations defined in \eqref{delta_restrict} will be effectively invariant with respect to orthogonal transformations, up to a small error, as long as the size of the set $I$ is small compared to $Q$.

It is now natural to extend the set of constraints used to define the fractional chromatic number with a new congruence constraint based on  \eqref{ieC}, which leads to a new notion.

\begin{defi}\label{def_GFCN}
Let $G$ be a finite graph with vertex set $X$ in the plane. The {\em geometric fractional chromatic number} of $G$, denoted by $\chi_{gf}(G)$, is defined as 
\begin{equation*} 
\chi_{gf}(G)=\min \sum _{S\subset X} \gamma(S)
\end{equation*}
subject to $\gamma(S) \geq 0$ for every $S\in {\mathcal {I}}(G)$, $\gamma(S) = 0$ for every $S\notin {\mathcal {I}}(G)$,   
\begin{equation*}
\sum_{S\in\mathcal{I}(G,x)} \gamma(S) = 1
\end{equation*}
for every vertex $x \in G$, and  
\begin{equation*} 
\sum_{S\subset T}\gamma(T) = \sum_{S'\subset T'}\gamma(T') 
\end{equation*}
whenever $\{ S, S' \} \in \Ccal(X)$, i.e.  $S, S'$ are congruent subsets of $G$. 
\end{defi}
Clearly, $\chi_{gf}(G) \geq \chi_f(G)$ holds. 
We also note that the definition may naturally be extended to graphs realized in $\R^d$ with $d \geq 2$, and that for infinite graphs $G'$ we can define $\chi_{gf}(G')=\sup\{ \chi_{gf}(G): \ G\subset G', \ G \ \text{is fnite}\}$.

\medskip

The analogue of Corollary~\ref{cor_fcn_bound} reads as follows. 

\begin{corollary}\label{cor_gfcn}
For any measurable, periodic 1-avoiding set $A\subset \R^2$, and any finite unit-distance graph $G$ in the plane we have  $1/\delta(A)\ge \chi_{gf}(G)$. Therefore $m_1(\R^2) \leq 1/\chi_{gf}(G)$,  and taking supremum over $G$ we get  $m_1(\R^2) \leq 1/\chi_{gf}(\R^2)$. 
\end{corollary}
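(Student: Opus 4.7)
The plan is to follow the same template as the proof of Corollary~\ref{cor_fcn_bound}, but replace the atoms $a_X(\eps)$ by their orthogonally-averaged counterparts $\ac_X(\eps)$ from \eqref{a_av}, which enlarge the constraint set by one more linear family, namely \eqref{ieC}. The key point is that every constraint listed in Lemma~\ref{lemma1} is linear in the atom densities, and linearity is preserved under integration against $\mu$.

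First, I would argue that $\{\ac_X(\eps):\eps\in\sigma(n)\}$ satisfies averaged versions of \ref{ieP}--\ref{ie1}. Because Lemma~\ref{lemma1} applies to $\varphi(X)$ for every $\varphi \in O(2)$ (the image of a 1-avoiding set system is still 1-avoiding, and $\delta(A)$ is invariant under translations, which is all Lemma~\ref{lemma1} uses), each of (ieP), (ieI), (ieT), (ie1) holds pointwise in $\varphi$ and hence survives integration: nonnegativity is preserved, the zero constraint for non-independent atoms (which depends only on $X$, not on $\varphi$, since $\varphi$ is an isometry) is preserved, and the equalities of sums remain equalities. In particular
\[
\sum_{\eps \in \sigma(n;i)} \ac_X(\eps) = \delta(A) \qquad \text{for every } i \in [n].
\]

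Next, I would invoke the remark following \eqref{ieC} to get the congruence constraint. If $X|_I \cong X|_J$ with $X|_J = \varphi_0(\tau(X|_I))$, then translating and then averaging equation \eqref{delta_restrict} over $O(2)$ yields
\[
\sum_{\eps \in \sigma(n;I)} \ac_X(\eps) \;=\; \sum_{\eps \in \sigma(n;J)} \ac_X(\eps),
\]
i.e.\ \eqref{ieC}. Thus the normalized atoms $\widetilde{\ac}_X(\eps) := \ac_X(\eps)/\delta(A)$ are feasible for the linear program obtained by adding \eqref{ieC} to \eqref{eq_FCN_LP}, and $\sum_\eps \widetilde{\ac}_X(\eps) = 1/\delta(A)$ by (ieT).

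Finally, with the same correspondence $\gamma(S) \leftrightarrow \widetilde{\ac}_X(\eps)$ for $S = I_X(\eps)$ used in the proof of Corollary~\ref{cor_fcn_bound}, this augmented LP coincides with the one defining $\chi_{gf}(G)$ in Definition~\ref{def_GFCN}: nonnegativity matches \ref{ieP}, vanishing on non-independent sets matches \ref{ieI}, the marginal equation matches \eqref{agix} as in Lemma~\ref{lemma_fcn}, and the congruence equations match \eqref{ieC}. Since $\widetilde{\ac}_X(\eps)$ is a feasible solution with objective value $1/\delta(A)$, and $\chi_{gf}(G)$ is the minimum objective, we conclude $1/\delta(A) \ge \chi_{gf}(G)$, giving $m_1(\R^2) \le 1/\chi_{gf}(G)$; taking the supremum over finite unit-distance subgraphs yields the bound for $\chi_{gf}(\R^2)$. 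The only subtle point—essentially the one flagged by the heuristic remark—is verifying that averaging genuinely produces feasible atoms satisfying \eqref{ieC}; but since we are only asserting an upper bound (not optimality of the averaged LP), this follows from linearity alone and no heuristic is needed.
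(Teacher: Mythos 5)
Your proof is correct and follows essentially the same route as the paper: you normalize the averaged atoms $\ac_X(\eps)/\delta(A)$ and verify, using linearity of \ref{ieP}, \ref{ieI}, \ref{ie1}, \ref{ieT} and \eqref{ieC} under integration over $O(2)$, that they form a feasible solution to the linear program in Definition~\ref{def_GFCN} with objective value $1/\delta(A)$, which is exactly the paper's (more terse) argument. Your closing remark correctly separates the feasibility issue (which is all the corollary needs, and follows from linearity) from the ``no loss'' claim addressed only heuristically in the paper's remark; one small cosmetic slip is the reference to \eqref{agix} and Lemma~\ref{lemma_fcn}, since Definition~\ref{def_GFCN} already imposes equality directly and the averaged atoms satisfy it via \ref{ie1}, so no appeal to that lemma is needed here.
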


\begin{proof}
By the relations (ieP), (ieI), (ie1) and (ieC), the quantities $\ac_X(\eps)/\delta(A)=\gamma(I(\eps))$, as defined by \eqref{a_av}, constitute a solution to the linear program in Definition \ref{def_GFCN}. Furthermore,  $\sum_{S\subset X} \gamma(S) = 1/\delta(A)$ by (ieT). Therefore, $1/\delta(A)\ge \chi_{gf}(G)$.
\end{proof}

\medskip

\noindent
{\em{Remark.}} These notions lead to an interesting connection between measurable and non-measurable quantities of the unit distance graph of $\R^2$. The details of the discussion below are not essential for the purposes of this paper, and will therefore be elaborated only in a follow-up publication.

Let $\alpha_1(\R^2)=\inf\{\alpha(G)/|G|: \mathrm{G \ is \ a \ finite \ UDG}\}$. One can think of $m_1(\R^2)$ as the measurable independence ratio, while $\alpha_1(\R^2)$ as the non-measurable independence ratio of~$\R^2$. Clearly,  $m_1(\R^2)\le \alpha_1(\R^2)$. 
Also, it is possible to show that $\alpha_1(\R^2)=1/\chi_f(\R^2)$. 

Furthermore, for any finite unit distance graph $G$ we trivially have  
 $\chi_f(G)\le \chi_{gf}(G)$, and hence   $\chi_{f}(\R^2)\le \chi_{gf}(\R^2)$. It turns out that by increasing the size of the graphs and applying a careful averaging argument one can prove the interesting conclusion 
$\chi_{f}(\R^2)= \chi_{gf}(\R^2) $. 

Combining these two observations we have $m_1(\R^2)\le \alpha_1(\R^2)=1/\chi_f(\R^2)=1/\chi_{gf}(\R^2)$. 

\medskip

As of now, our numerical search did not result in any graph $G \subset \R^2$ with $\chi_{gf}(G)>4$, with the record holder being $\chi_{gf}(G)= 3.9954$. Based on this numerical evidence, we venture to conjecture that $\chi_f(\R^2)= \chi_{gf}(\R^2) = 4$. This would imply that $\alpha_1(\R^2)=1/4$ and $m_1(\R^2)\le 0.247$ hold simultaneously, providing an interesting example of a natural infinite graph for which the measurable and the non-measurable variants of a parameter have different values.

\section{Fourier analytical tools}
\label{sect_fourier}
We are going to specify the autocorrelation function $f$ via its Fourier coefficients. 
A fairly detailed discussion of the procedure is contained in \cite{deOV10} and \cite{KeMOR16}, and we only include a summary here to make the present paper self-contained. For further details about these standard tools in harmonic analysis, we direct the reader to the book of Katznelson~\cite{Ka68}).

We remind the reader that the 1-avoiding set $A$ is assumed to be periodic with a period lattice $L$. This enables us to perform a Fourier expansion of $f(x)=\delta(A\cap (A-x))$ in the Hilbert space $L^2(\R^2/L)$.

The Fourier coefficients $\hat{f}(u)$ are defined, as usual, by
\[
\hat{f}(u) = \la f, \chi_u \ra
\]
where $\chi_u(x) = e^{i u \cdot x}$, $u \in 2 \pi L^*$, where $L^\ast$ is the dual lattice  of $L$:  $L^\ast =\{u\in \R^2 : u \cdot v \in \Z \ \textrm{for all} \ v\in L\}$, and for any
measurable functions $h, g: \R^2 \to \C$, 
\[
\la h,g \ra = \lim_{T \to \infty} \frac 1 {(2T)^2} \int_{[-T, T]^2} h(x) \overline{g(x)} \dd x\ , 
\]
if the limit exists.  Note that $f$ may be written as 
\[
f(x) = \la {\bf 1}_A, {\bf1}_{A - x} \ra,
\]
and hence, by $\widehat{{\bf1}}_{A - x}(u)=\widehat{{\bf1}}_{A}(u) e^{i u \cdot x}$, Parseval's identity yields 
\[
f(x) = \sum_{u \in 2 \pi L^*} |\widehat{{\bf 1}}_A(u)|^2 e^{i u \cdot x}\,.
\]
Thus, the Fourier coefficient $\widehat{f}(u)$ corresponding to the term $e^{i u \cdot x}$ is given by
\begin{equation}\label{f_pos}
  \widehat{f}(u)=|\widehat{{\bf 1}}_A(u)|^2 \geq 0
\end{equation}
for every $u \in 2 \pi L^*$. After averaging as in \eqref{f_av2}, we may express $\fc(x)$ as
\begin{equation}\label{fkor}
\mathring{f}(x) =\sum_{u \in 2 \pi L^*}\widehat{f}(u) \Omega_2( |u| |x|),
\end{equation}
where $\O_2(|x|)$ is the Bessel function of the first kind with parameter 0,  defined as
\[
\O_2(|x|) = \frac 1 {2 \pi} \int_{S^1} e^{ i x \cdot \xi} d \omega(\xi),
\]
for any $x\in \R^2$. Note that $\mathring{f}(x)$ is radial. It is not periodic (unlike $f$), and it is not the autocorrelation function of any nonempty set.

Introducing the notation
\[
\kappa(t) = \sum_{u \in 2 \pi L^* , |u| = t} \widehat{f}(u)
\]
for $t \geq 0$, \eqref{fkor} simplifies to
\begin{equation}\label{fkappa}
\mathring{f}(x) = \sum_{t \geq 0} \kappa(t) \Omega_2( t |x|),
\end{equation}
where the summation is taken for those (countably many) values of $t$ which come up as a length of a vector in $2\pi L^*$. In particular, for $x = 0$ we have that 
\begin{equation}\label{fkappa0}
\delta(A) = f(0)=\mathring{f}(0) = \sum_{t \geq 0} \kappa(t) .
\end{equation}
With this, we have completed the technical preparations required to transform the problem to the linear optimization setting.

\section{Linear programming format}
\label{sect_linprog}

We are going to estimate $\delta(A) = f(0) = \fc(0)$ by virtue of reformulating the problem as a linear optimization program. The constraints of the program correspond to the natural conditions on the Fourier coefficients of $f$ as well as  the inclusion-exclusion constraints generated by a finite set of points $X \subset \R^2$ with cardinality $n$.  Note that by \eqref{fkappa0}, the objective function can be expressed as $\delta(A)= \sum_{t \geq 0} \kappa(t)$.

The (infinitely many) variables of the linear program are $\{ \kappa(t): \ t \geq 0 \}$ and $\{\ac_{X}(\eps): \eps \in \sigma(n) \}$. Note that these latter are auxiliary variables which are not present in the objective function.

The linear program, which we are going to refer to as (LP) from now on, reads as follows -- the constraints are implied by \eqref{f_pos}, \ref{ieP}, \eqref{f_unitdistavoid}, \ref{ieT}, \ref{ie1}, \ref{ie2}, and \eqref{ieC}  and must hold for any set of coefficients derived from a measurable, periodic, planar 1-avoiding set.
\medskip

\renewcommand{\arraystretch}{1.7}
\begin{tabular}{lll}
&\textrm{Maximize} $\sum_{t \geq 0} \kappa(t)$ &\\
&\hspace{1 em} \textrm{subject to} &\\
(CP) & $\kappa(t) \geq 0$  \textrm{for every} $t \geq 0$\\
(IEP) & $\ac_X(\eps) \geq 0$ for each $\eps \in \sigma(n) $\\
(C0) & $ \sum_{t \geq 0} \kappa(t) \Omega_2( t ) =0 $\\
(IET) & $ \sum_{\eps \in \sigma(n)} \ac_X(\eps) =1 $ \\
(IE1) & $\sum_{t \geq 0} \kappa(t) - \sum_{\eps \in \sigma(n;i) }  \ac_X(\eps) = 0$ for every $i \in [n]$ \\
(IE2) & $\sum_{t \geq 0} \kappa(t) \Omega_2( t |x_i - x_j|) - \sum_{\eps \in \sigma(n;i,j) }  \ac_X(\eps) = 0$ for every $\{i,j\} \in \bracenom{n}{2}$\\
(IEC) &
$\sum_{\eps \in \sigma(n; I) }  \ac_X(\eps) -  \sum_{\eps \in \sigma(n; J) }  \ac_X(\eps)=0 \textrm{ for every } \{I, J \} \in \Ccal(X)$.
\end{tabular}
\medskip

\noindent 
By \eqref{fkappa0}, the solution of this linear program gives an upper bound on $\delta(A)$.

\medskip

\noindent
{\em{Remark.}} Note that by means of listing congruence constraints in (LP) for pairs of 1-element and 2-element vertex sets too, it suffices to include (IE1) only for one vertex (we choose  $i=1$), and (IE2) for one pair of vertices for each distance among points of $X$. Also note that (IEP), (C0) and (IE2) imply that $\ac_X(\eps) = 0$ is forced to hold for each non-independent atom, in accordance with (ieI) of Lemma~\ref{lemma1}. Thus, we only have to include independent atoms in the list of variables of (LP).

\medskip

In principle, it would be possible to apply Lemma~\ref{lemma1} to several point sets $X_1, \ldots, X_m$ in order to gain a wider set of linear constraints. Notice, however, that all these constraints are implied by the ones obtained by applying Lemma~\ref{lemma1} and \eqref{ieC} to $\bigcup_{i=1}^m X_i$. Since this yields a stronger estimate,  we  only apply one set $X$.

\medskip 

We also remark that the constraints of (LP) induce the ones in \cite[Theorem 2.6]{Sz84}, \cite[Lemma 3.2]{KeMOR16} and \cite[Lemma 2.1, Lemma 2.2, Lemma 3.1, Lemma 3.2]{AmM22} provided that $X$ contains a congruent copy of all the point sets applied within these --- therefore, our new estimate improves on these earlier ones. We illustrate this with the bound of Székely~\cite{Sz84}. His estimate of  $12/43 \approx 0.2791$ stems from considering the three graphs on the left side of Figure~\ref{fig_szekely} (and, importantly, using the observation that $\fc(t) \leq \fc(0)/2$ for each $t \geq 1/2$ which can be obtained by averaging on circles of radius $t$). Note that the 7-point set $X$ on the right side contains a congruent copy of all three of these. Solving the linear program (LP) associated to this point set $X$  yields the upper bound of $0.2771 < 12/43$ on $\delta(A)$, which improves on the original bound.

\medskip

More generally, in \cite{DeOV22} it is shown how inclusion-exclusion inequalities and subgraph constraints
come from facets of the Boolean quadratic polytope. It is reasonable to expect that some of the constraints in (LP) above also correspond to 
facets of this polytope. However, to extract such information one would need to eliminate the `atomic' variables $\ac_X(\eps)$, and deduce constraints involving the values of the autocorrelation function $\fc(t)$ only. We do not see a direct way of doing this.

\begin{figure}
\centering
\includegraphics[width=\linewidth]{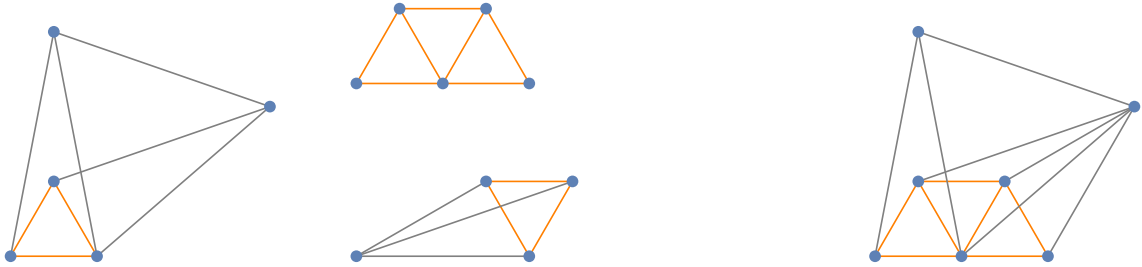}
\caption{{\em Left:} the three graphs used by Székely~\cite{Sz84}.  {\em Right:} a 7-point set $X$ containing all these.}
\label{fig_szekely}
\end{figure}

\medskip

By standard linear programming duality, the solution of the dual program provides an upper bound on the solution of (LP). Note that the dual of a linear program of the form
\[
\textrm{maximize } \mathbf{c}^\top \mathbf{x} \ \textrm{ subject to } A \, \mathbf{x} = \mathbf{b}, \ \mathbf{x}\geq \mathbf{0}
\]
is given by
\begin{equation}\label{dualLP}
\textrm{minimize } \mathbf{b}^\top \mathbf{y} \ \textrm{ subject to } A^\top \, \mathbf{y} \geq  \mathbf{c}.
\end{equation}
Applying this in the present context yields the next statement.

\begin{proposition}\label{prop1}
  Let $X$ be a set of $n$ points in the plane. Suppose that the real numbers $w_0$, $w_T$, $\{ w_1(i): i \in [n] \}$,  $\{ w_2(i,j): \{i,j\} \in \bracenom{n}{2} \}$, $\{ w_c(I,J): \{I,J\} \in \Ccal(X) \}$ are such that  the function defined by
  \begin{equation}\label{W_def}
  W(t) =   w_0 \, \Omega_2(t) + \sum_{i \in [n]} w_1(i) + \sum_{\{i,j\} \in \bracenom{n}{2}} w_2(i,j)   \Omega_2(t |x_i - x_j|)
  \end{equation}
  satisfies $W(t) \geq 1$ for every $t \geq 0$, and the quantity defined by
  \begin{equation}\label{V_def}
  \begin{split}
  V(\eps) = w_T  - \sum_{i \in [n]:\ \eps_i=1} &w_1(i)  -  \sum_{\{i,j\} \in \bracenom{n}{2}:\ \eps_i=\eps_j=1} w_2(i,j) \\ &+ \sum_{\{I,J\} \in \Ccal(X) : \eps \in \sigma(n; I) } w_c(I,J) - \sum_{\{I,J\} \in \Ccal(X) : \eps \in \sigma(n; J) } w_c(I,J)
  \end{split}
  \end{equation}
  satisfies $V(\eps) \geq 0$ for every $\eps \in \sigma(n)$. Then
  \begin{equation}\label{mwT}
  m_1(\R^2) \leq w_T.
  \end{equation}
\end{proposition}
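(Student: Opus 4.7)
The plan is to view Proposition~\ref{prop1} as the standard weak LP-duality statement for the program (LP). The objective is a maximization of $\sum_{t\geq 0}\kappa(t)$, and the dual variables $w_0$, $w_T$, $\{w_1(i)\}$, $\{w_2(i,j)\}$, $\{w_c(I,J)\}$ correspond, respectively, to the equality constraints (C0), (IET), (IE1), (IE2), (IEC). Checking against the dual template \eqref{dualLP} shows that the dual constraint associated to the primal variable $\kappa(t)$ is precisely $W(t)\geq 1$ (with coefficient $\Omega_2(t)$ from (C0), a $1$ from each (IE1), and $\Omega_2(t|x_i-x_j|)$ from each (IE2)), while the dual constraint associated to $\ac_X(\eps)$ is exactly $V(\eps)\geq 0$ (the sign of $w_1,w_2$ being negative because in (IE1), (IE2) the atom variables appear with a minus sign, and the $\pm$ structure in (IEC) accounts for the signs of $w_c$). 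The dual objective to be minimized is $w_T$, so weak duality gives the conclusion.

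To present the argument directly without invoking LP-duality as a black box, I would fix any periodic, measurable, $1$-avoiding $A\subset\R^2$ and let $\{\kappa(t)\}$, $\{\ac_X(\eps)\}$ be the quantities attached to $A$ via Section~\ref{sect_fourier} and \eqref{a_av}; by Lemma~\ref{lemma1}, \eqref{ieC}, \eqref{f_pos} and \eqref{fkappa} these satisfy every constraint of (LP). Since $\kappa(t)\geq 0$ and $W(t)\geq 1$ for all $t\geq 0$, and since $\delta(A)=\sum_{t\geq 0}\kappa(t)$ by \eqref{fkappa0}, I have
\begin{equation*}
\delta(A)\;\leq\;\sum_{t\geq 0}\kappa(t)\,W(t).
\end{equation*}
Expanding $W(t)$ and applying (C0), (IE1) and (IE2) term-by-term converts the right-hand side into a sum over atoms:
\begin{equation*}
\sum_{t\geq 0}\kappa(t)\,W(t)\;=\;\sum_{\eps\in\sigma(n)}\ac_X(\eps)\Bigl[\sum_{i:\eps_i=1} w_1(i)+\sum_{\{i,j\}:\eps_i=\eps_j=1} w_2(i,j)\Bigr],
\end{equation*}
where the swap-of-summation step is a routine indicator-function manipulation.

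Next, for each pair $\{I,J\}\in\Ccal(X)$, the constraint (IEC) asserts that $\sum_{\eps\in\sigma(n;I)}\ac_X(\eps)-\sum_{\eps\in\sigma(n;J)}\ac_X(\eps)=0$. Multiplying by $w_c(I,J)$ and summing over all such pairs therefore contributes zero, allowing me to add the corresponding expression to the bracket above without changing its value; after interchanging sums, the bracket becomes $w_T-V(\eps)$ once I also use (IET) in the form $w_T=w_T\sum_{\eps}\ac_X(\eps)$. Putting everything together,
\begin{equation*}
\delta(A)\;\leq\;\sum_{\eps\in\sigma(n)}\ac_X(\eps)\bigl(w_T-V(\eps)\bigr)\;=\;w_T\;-\;\sum_{\eps\in\sigma(n)}\ac_X(\eps)\,V(\eps)\;\leq\;w_T,
\end{equation*}
the last inequality by $\ac_X(\eps)\geq 0$ and the assumption $V(\eps)\geq 0$. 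Since this holds for every periodic, measurable, $1$-avoiding $A$, and since periodic sets approximate $m_1(\R^2)$ arbitrarily well (as recalled in Section~\ref{sect_IE}), we obtain $m_1(\R^2)\leq w_T$.

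There is no essential obstacle beyond bookkeeping: the only thing to be careful about is the sign pattern and the correct interchange of summations in two places (the expansion of $\sum_t\kappa(t)W(t)$ and the incorporation of the (IEC) terms), both of which are formal consequences of the definitions of $\sigma(n;i)$, $\sigma(n;i,j)$, and $\sigma(n;I)$.
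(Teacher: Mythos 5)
Your proposal is correct and follows essentially the same route as the paper: the paper also explicitly remarks that the statement is just the dual linear program, and then gives the same direct, duality-free verification by expanding $\sum_t \kappa(t)W(t)$ via (C0), (IE1), (IE2), swapping to a sum over atoms, invoking $V(\eps)\geq 0$, and finishing with (IEC) and (IET). The only cosmetic difference is the order in which you fold in the (IEC) terms versus applying $V(\eps)\geq 0$; your final passage via periodic approximation is left implicit in the paper but is the right justification.
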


\begin{proof}
This is simply the dual linear program \eqref{dualLP}. Without referring to the general theory of linear programming, one can derive the estimate easily as follows. Assume that the coefficients specified above satisfy the criteria. Let $A$ be a measurable, periodic, planar 1-avoiding set with autocorrelation function $f$. Then, by the properties listed in (LP),
\begin{align*}
    \delta(A) &= f(0) = \fc(0) = \sum_{t \geq 0} \kappa(t) \leq \sum_{t \geq 0} \kappa(t) W(t) \\
    &= w_0 \sum_{t \geq 0} \kappa(t) \Omega_2(t) + \sum_{i \in [n]} w_1(i) \sum_{t \geq 0} \kappa(t)   + \sum_{\{i,j\} \in \bracenom{n}{2}} w_2(i,j)   \sum_{t \geq 0} \kappa(t) \Omega_2(t |x_i - x_j|)\\
    &= 0 + \sum_{i \in [n]} w_1(i) \sum_{\eps \in \sigma(n;i) }  \ac_X(\eps) + \sum_{\{i,j\} \in \bracenom{n}{2}} w_2(i,j)  \sum_{\eps \in \sigma(n;i,j) }  \ac_X(\eps) \\
    &= \sum_{\eps \in \sigma(n)}  \ac_X(\eps) \Big( \sum_{i \in [n]:\ \eps_i=1} w_1(i)  +  \sum_{\{i,j\} \in \bracenom{n}{2}:\ \eps_i=\eps_j=1} w_2(i,j)  \Big)\\
    &\leq \sum_{\eps \in \sigma(n)}  \ac_X(\eps) \Big( w_T + \sum_{\{I,J\} \in \Ccal(X) : \eps \in \sigma(n; I) } w_c(I,J) - \sum_{\{I,J\} \in \Ccal(X) : \eps \in \sigma(n; J) } w_c(I,J)  \Big)\\
    &=w_T +  \sum_{\{I,J\} \in \Ccal(X)} \Big(\sum_{\eps \in \sigma(n; I) }  \ac_X(\eps) -  \sum_{\eps \in \sigma(n; J) }  \ac_X(\eps) \Big)\\
    &=w_T\,. \qedhere
\end{align*}
\end{proof}

\noindent
Note that by means of the Remark following the definition of (LP), it suffices to check that $V(\eps) \geq 0$ holds for each $\eps \in \sigma(n)$ which represents an independent atom. This accelerates the numerical computations greatly. 

\medskip

\section{Breaking the 1/4 barrier}\label{breaking}

Our goal is to search for a point set  $X$ such that the solution of (LP) is as small as possible. However, this linear program has infinitely many variables of the form $\kappa(t): t \geq 0$, which necessitates to use a discrete approximation. Therefore, we only search for the coefficients $\kappa(t_i)$, where $t_i = i \eps_0$ with $i \in \NN$ such that  $t_i \in [0, t_{max}]$, with some $\eps_0>0$ and $t_{max} >0$. For all other values of $t \geq 0$, we set $\kappa(t)=0$. Based on the calculations in Section~\ref{sect_verification}, we set $\eps_0 =0.05 $ and $t_{max}=600$.

Similarly to the preceding works, we utilize a heuristic computer search in order to find a suitable point set.
Initially, we combined the (IE) constraints with the ones used in \cite{AmM22} in order to improve the estimates. Yet, the incrementally stronger set of (IE) constraints quickly outpowered the older ones. Note that even though this would also follow {\em formally} from our previous remarks, the cardinality bound on $X$ stemming from computational complexity makes this observation non-trivial. Nevertheless, based on empirical evidence, we apply a tabula rasa approach and search for constraints arising from a single point set~$X$.

We limit our search to point sets of cardinality $\leq 24$, as this proved to be sufficient to reach our goal of proving Erdős's conjecture.  Note that when defining (LP), not every $\ac_X(\eps)$ needs to be added as a new variable. Indeed, if for $\eps \in \sigma(n)$, $\eps_i = \eps_j = 1$ holds with some $i \neq j$ and $|x_i - x_j| =1$, then $\ac_X(\eps)$ is forced to equal to 0 by (ie2). Therefore, these variables may be omitted from (LP). Hence, sets with many unit distances lead to (LP)s with fewer variables, which helps in practical implementation.

To exploit congruence constraints of the form (IEC), it is desirable that $X$ contains many congruent pairs of subsets. Combining this with the preference for unit distances, we adopt the following strategy. Starting from an initial point set $X_0$ (to be specified later as the Moser spindle), we add points to it one-by-one so that each new point is at distance 1 from at least two points which have already been constructed. Thus, we are searching for a node with a low score in the following tree:
\begin{enumerate}
    \item Each node of the tree corresponds to a finite planar set $X$. The score of each node is the solution of (LP) resulting from the corresponding point set. 
    \item The root is the initial set $X_0$.
    \item Given a node $X_i$, its children are the nodes obtainable from $X_i$ by adding one point: $X_{i+1}=X_i\cup\{x_{i+1}\}$ such that:
    \begin{enumerate}
        \item To have only finitely many choices at each step, we only consider those points $x_{i+1}$ that  have distance 1 to at least two points of $X_i$.
        \item  In light of the error estimates in Section~\ref{sect_verification}, we further restrict the set of accepted points $x_{i+1}$ to those having distance at least 0.1 to all points of $X_i$.
    \end{enumerate}
\end{enumerate}

In order to exploit non-trivial parallel relations between pairs of vertices, we choose the Moser spindle $M_7$ to serve as $X_0$.

For the search algorithm we use beam search, which may be formalized as follows.  
\begin{enumerate}
    \item At each step, we have in hand a collection $\mathscr X_i$ of nodes $X_i$ at a given level.
    \item We initialize with $\mathscr X_0=\{X_0\}$.
    \item We proceed as follows:
    \begin{enumerate}
        \item We let $\widetilde{\mathscr X}_{i+1}$ denote the collection of all children of all nodes in the present collection $\mathscr X_i$.
        \item We calculate the scores for all sets in $\widetilde{\mathscr X}_{i+1}$.
        \item We sort $\widetilde{\mathscr X}_{i+1}$ based on the scores.
        \item We let $\mathscr X_{i+1}$ be the collection of the best (i.e. lowest scoring) 100 children in $\widetilde{\mathscr X}_{i+1}$. (The parameter 100 is referred to as the beam width.)
    \end{enumerate}
\end{enumerate}

The output of the above beam search algorithm is a point cloud specified by algebraic expressions. As explained in  Section~\ref{sect_impl}, we speed up running time by deferring the symbolic computation until we settle on a given point set.

We present a point set $X_{23}$ with 23 vertices that was provided by the computer search described above. Its exact, symbolic coordinates are listed in Table~\ref{table:point set} of the Appendix. We denote the corresponding unit distance graph by $G_{23}$ (see Figure~\ref{fig_x23}). We note that the majority of the vertices have degree at least 3 in $G_{23}$, and all vertices have degree at least 2. Pairs of points of $X_{23}$ determine only 27 distinct distances (this partly accounts for the large number of congruent pairs of vertex sets). As observed earlier, it suffices to include among the variables of (LP) those atoms which correspond to independent sets in the unit distance graph, as the others must have density 0. With this simplification, the linear program defined by $X_{23}$ has 13552 atom variables and 12000 Fourier variables. It consists of 23 (IE1) constraints, 206 (IE2) constraints connecting the Fourier variables to the atom variables, and 5868 (IEC) congruence constraints.

Figure~\ref{fig_fs} depicts the radialized autocorrelation function $\fc$ yielded by $X_{23}$, together with the previous best upper bound given by \cite{AmM22}, and the radialized autocorrelation function of the Croft construction. Note that our upper bound yields an $\fc$ significantly closer to the Croft construction.

The numerical solution of (LP) obtained from $X_{23}$ is $0.24697$. In order to prove an upper bound on $m_1(\R^2)$ which nearly equals this value, we need to apply Proposition~\ref{prop1} and estimate the errors stemming from numerical computations and discrete approximation. This is the goal of the next section.

\begin{figure}
  \centering
  \includegraphics[width=0.42\linewidth]{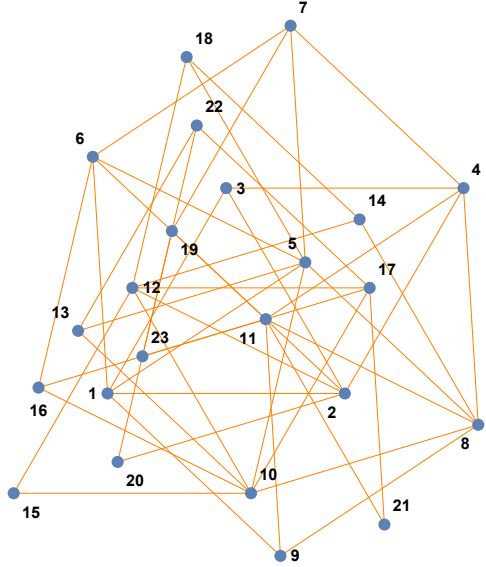}
\caption{The unit distance graph $G_{23}$ defined on the vertex set $X_{23}$.}
\label{fig_x23}
\end{figure}

\begin{figure}
\centering
\includegraphics[width=0.75 \linewidth]{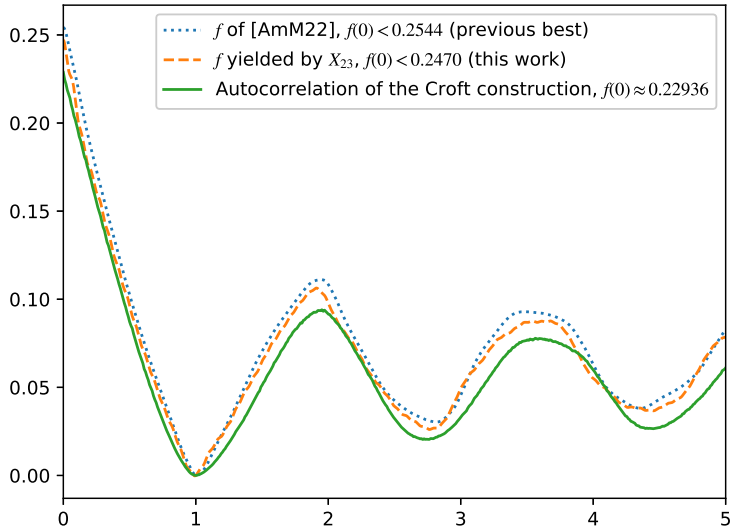}
\caption{Plotting autocorrelation functions on the interval $[0,5]$.}
\label{fig_fs}
\end{figure}

\section{Verification}
\label{sect_verification}

In this section, we address error estimates resulting from the discrete approximation of the linear program (LP) we use. Fix (LP) to be the linear program corresponding to $X_{23}$.
To start with, we set $w_0$, $w_T$, $\{ w_1(i): i \in [n] \}$,  $\{ w_2(i,j): \{i,j\} \in \bracenom{n}{2} \}$, $\{ w_c(I,J): \{I,J\} \in \Ccal(X) \}$ to be the coefficients of the solution vector of the dual of (LP). There are 2350 non-zero coefficients, of which 2321 are of type $ w_c(I,J)$. Due to space limits, we do not list these in the article; however, they are made available at \cite{Web}. The remaining coefficients are listed in Table \ref{table:dual coefficients}.

We are going to modify these slightly so that the conditions on $W(t)$ and $V(\eps)$ of Proposition~\ref{prop1} are satisfied for them, and apply estimate \eqref{mwT}  for this modified set of coefficients.

First, we address $V(\eps) \geq 0$, where $\eps \in \sigma(n)$ and $V(\eps)$ is given by \eqref{V_def}. This is a finite set of inequalities, and high precision calculation ensures that 
\[
V(\eps) \geq -\nu
\]
holds true for each $\eps \in \sigma(n)$ with $\nu = 10^{-5}$.  Thus, by changing $w_T$ to $\widetilde{w}_T = w_T + \nu$ we guarantee that 
$V(\eps) \geq 0$ holds for all instances. 

Second, we estimate $W(t)$ defined by \eqref{W_def} by an argument which is analogous to \cite[Section 3.2]{KeMOR16}. As first observed by Schoenberg~\cite{Sch38},  $\Omega_2(t) = J_0(t)$, the Bessel function of the first kind with parameter 0. Accordingly, let 
\begin{equation}\label{eq_phi}
    \varphi(t) =  w_0 \, J_0(t) + \sum_{i \in [n]} w_1(i) + \sum_{\{i,j\} \in \bracenom{n}{2}} w_2(i,j)    J_0(t |x_i - x_j|)
\end{equation}
with $x_i$ being defined by Table~\ref{table:point set} and the non-zero coefficients $w$ specified by Table \ref{table:dual coefficients}. For small (and not so small) values of $t$, the function $\varphi(t)$ is plotted on Figure~\ref{fig_fi}. Note that, in particular, $w_1(i) = 0$ except for $i = 1$, for which we have $w_1(1) = 1.059383649998022$.

\begin{figure}
\centering
\begin{subfigure}{.49\textwidth}
  \centering
  \includegraphics[width=0.9\linewidth]{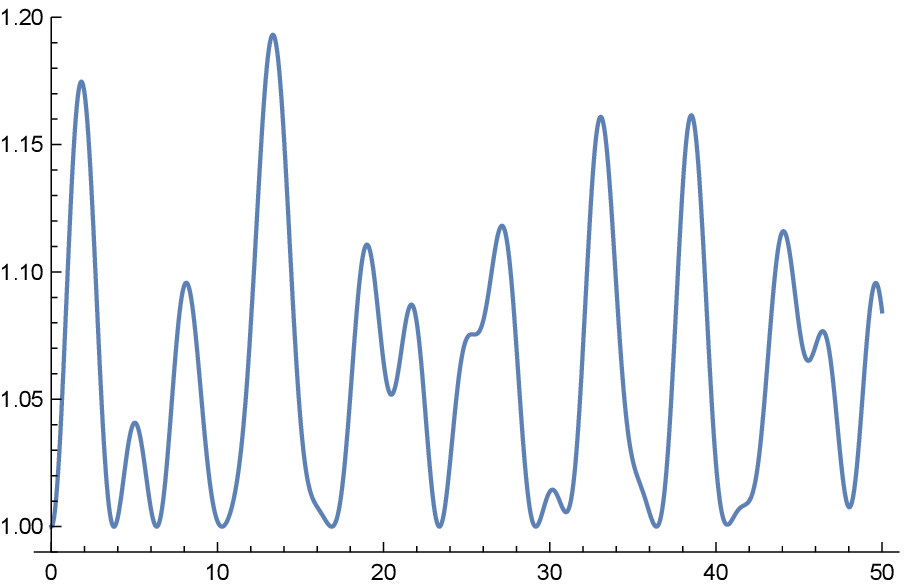}
\end{subfigure}
\begin{subfigure}{.49\textwidth}
  \centering
  \includegraphics[width=0.9\linewidth]{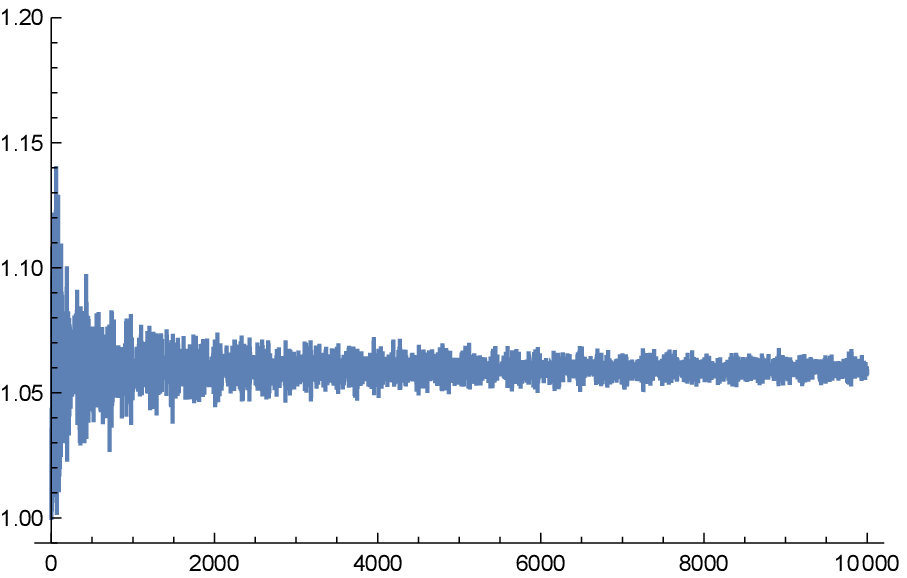}
\end{subfigure}
\caption{The function $\varphi(t)$ plotted on $[0,50]$ ({\em left}) and $[0,10000]$ ({\em right}).}
\label{fig_fi}
\end{figure}

In order to show that $\varphi(t)\geq 1$ for all $t \geq 0$, we first note that 
\[
\lim_{t \to \infty} J_0(t) = 0.
\]
This follows from the asymptotic formula for $J_\alpha$ for $\alpha \geq 0$ (cf. Watson \cite[§7.21, eq. (1)]{Wa22}). Accordingly, 
\[
\lim_{t \to \infty} \varphi(t) = w_1(1)= 1.059383649998022,
\]
hence $\varphi(t) \geq 1$ if $t$ is sufficiently large. We have to provide an explicit threshold here. 

Note that $J_0'(t) = J_1(t)$. Thus, the locations of local extrema of $J_0(t)$ agree with the positive zeroes $j_1 < j_2 < j_3 <\ldots$ of $J_1(t)$. The local extrema of $J_0$ decrease in absolute value (cf. \cite[§15.31]{Wa22}), that is
\[
|J_0( j_1)| > |J_0( j_2)| > |J_0( j_3)| > \ldots
\]
Thus, by \eqref{eq_phi}, on the domain $ t \in [ T , \infty)$ the following estimate holds:
\begin{equation} \label{phi_est}
\varphi(t) \geq w_1(4) - \Big(|w_0| +  \sum_{\{i,j\} \in \bracenom{n}{2}} |w_2(i,j)| \Big) |J_0 (s)|,
\end{equation}
where $s$ is any zero of $J_1$ not exceeding $T \cdot d_{min} $ where
\[
d_{min} = \min_{\{i,j\} \in \bracenom{n}{2}, \ w_2(i,j) \neq 0\ }   |x_i - x_j|.
\]
We restricted our search to point sets with $d_{min} \geq 0.1$. In the present case,

\begin{equation}\label{west}
|w_0| +  \sum_{\{i,j\} \in \bracenom{n}{2}} |w_2(i,j)| \approx 1.93062 < 2.
\end{equation}

Set $T = 10000$. Then $T \cdot d_{min} \geq 1000$. Hence, by taking the largest zero of $J_1$ not exceeding 1000, which is $s_0 = 999.81148\ldots$, \eqref{phi_est} and  \eqref{west} imply that  for $t>T$,
\[
\varphi(t) > w_1(1) - 2 |J_0(s_0)| = 1.00892\ldots >1.
\]
Therefore, we have to find the minimum of $\varphi(t)$ on the interval $[0, T]$. This can be lower bounded by using interval arithmetic (note that $|J_0'(t)| = |J_1(t)|<1 $), which shows that
\[
\varphi(t) > 1 - \mu 
\]
with $\mu = 0.00006$ for $ t \in [0, T]$ (the global minimum of $\varphi(t)$ is attained at $t=3.77488\ldots$ with $\varphi(t) = 0.99995003\ldots$). 

In the final step, we divide all the coefficients $w$ by the factor $(1 - \mu)$. This ensures that $W(t) \geq 1$ holds for all $t \geq 0$, while the conditions $V(\eps) \geq 0$ remain valid. We may apply Proposition~\ref{prop1} to these transformed set of coefficients. Estimate \eqref{mwT} results in the upper bound 
\[
m_1(\R^2) \leq \frac{\widetilde{w}_T}{1 - \mu} = \frac{w_T + \nu}{ 1 - \mu } = 0.24699\ldots < 0.2470.
\]

\section{Implementation}
\label{sect_impl}

The linear programs we have to solve are quite large: the (LP) corresponding to the exhibited 23 vertex set $X_{23}$ has 25552 variables and 6099 constraints besides the positivity constraints. We used the Mosek \cite{Mosek} and Gurobi \cite{Gurobi} LP solvers, with typical solve times not exceeding 30 seconds on a regular laptop computer.

The beam search run for a week on 128 CPUs, creating and solving $186472$ linear programs, and resulted in a 24 element vertex set that satisfied our requirements. We observed that one of the points does not improve the estimate, therefore we removed it. We call the resulting set $X_{23}$.

We have used the NumPy computation package \cite{harris2020array} for numerical computation, and the SymPy computation package \cite{SymPy} for symbolical computation. We have parallelized the beam search algorithm via message passing. The final estimate was verified by a piece of independently developed Mathematica code that we publish at \cite{Web}.

Floating point numbers are much faster to work with than symbolic formulae. Hence, during the CPU-intense beam search, we only work with the former. After settling on a point set, we need to re-create it symbolically.

The algorithm that reverse-engineers a symbolic point set from a numerical point set proceeds as follows: It processes the points one by one, in the same order they were included in the set. Let us assume that it currently processes point $a$, which is provided as a complex floating point number. The symbolic expression of $a$ is found by one of the following three options:
\begin{enumerate}
\item If (up to numerical precision) $a$ is part of a parallelogram together with three already processed points $b, c, d$, then $a$ may be written up symbolically from the symbolic versions of $b, c, d$. 
\item  The same is true if a point $a$ is part of a regular triangle with unit sides together with already processed points $b$ and $c$. 

 \item  Otherwise, by the construction scheme, there exist formerly included points $b$ and $c$ both unit distance away from $a$. Using these, $a$ can be expressed as one of the two solutions of a quadratic equation in terms of the symbolic versions of $b$ and $c$. The appropriate solution is chosen according to the numerics.
\end{enumerate}
In principle, utilizing solely option (3) suffices to solve the reverse engineering task. However, in practice it leads to an exponential blow-up in formula size, making even moderately sized point sets ($n > 13$) unmanageable symbolically. Fortunately, options (2) and (3) are very rarely needed, since we construct point sets containing plenty of parallelograms. In fact, for the particular point set $X_{23}$ described below, one may express the vertices symbolically using only steps of type (1), except for a single instance of type (2). This yields that $X_{23}$ is a subset of the ring $\overline{R}_2$ spanned by the vertices of the Moser spindle (see \cite{PM22}). This is a surprising observation considering that our search was not restricted to this ring.

Having the algebraic expressions at hand,
\begin{enumerate}
\item we verify that the supposed unit distances are symbolically equal to one.
\item we verify that all congruence constraints \eqref{ieC} are valid.
\item we estimate the errors resulting from discrete approximation of the continuous (LP), in particular, ensure that the function $W(t)$ of \eqref{W_def} is at least 1 for all non-negative arguments.
\end{enumerate}
Our code may be found at  \cite{Web}, while the verification documentary is also available at \cite{arXiv}.

\section{Acknowledgements}

We are indebted to P. Ágoston, D. Czifra, P. Lakatos, D. Pálvölgyi, J. Parts, and F. Vallentin for valuable discussions, and for the anonymous referees for their useful remarks and suggestions.

\medskip

Research of G. A. was partially supported by ERC Advanced Grant "GeoScape" no.  882971,  by the Hungarian National Research grant no. NKFIH KKP-133819,  and by project no. TKP2021-NVA-09, which has been implemented with the support provided by the
Ministry of Innovation and Technology of Hungary from the National
Research, Development and Innovation Fund, financed under the
TKP2021-NVA funding scheme.
A. Cs., D. V. and P. Zs. were supported by the Ministry of Innovation and Technology NRDI Office within the framework of the Artificial Intelligence National Laboratory (RRF-2.3.1-21-2022-00004). A. Cs. was partly supported by the project TKP2021-NKTA-62 financed by the National Research, Development and Innovation Fund of the Ministry for Innovation and Technology, Hungary. M. M. was supported by the grants NKFIH-132097 and NKFIH-129335. 

\section{Statements and Declarations}
The authors declare that they have no conflict of financial or non-financial interests that are directly or indirectly related to the work submitted for publication.

\vspace{1 cm}
\noindent
{\sc Gergely Ambrus} ({\em corresponding author})
\smallskip

\noindent
{\em Department of Geometry, Bolyai Institute, University of Szeged, Aradi vért. tere 1, 6720, Szeged, Hungary, \\ and Alfréd Rényi Institute of Mathematics, Reáltanoda u. 13-15, 1053, Budapest, Hungary}

\smallskip

\noindent
e-mail address: \texttt{ambrus@renyi.hu}

\medskip
\noindent
{\sc Adrián Csiszárik}
\smallskip

\noindent
{\em Department of Computer Science, Institute of Mathematics, Eötvös Loránd University, Pázmány Péter sétány 1/C, H-1117, Budapest, Hungary, \\ and Alfréd Rényi Institute of Mathematics, Reáltanoda u. 13-15, 1053,  Budapest, Hungary}

\smallskip

\noindent
e-mail address: \texttt{csadrian@renyi.hu}

\medskip
\noindent
{\sc Máté Matolcsi}
\smallskip

\noindent
{\em Department of Analysis, Institute of Mathematics, Budapest University of Technology and Economics, Műegyetem rkp. 3., H-1111 Budapest, Hungary, \\ and Alfréd Rényi Institute of Mathematics, Reáltanoda u. 13-15, 1053, Budapest, Hungary}
\smallskip
\noindent
e-mail address: \texttt{matolcsi.mate@renyi.hu}

\medskip
\noindent
{\sc Dániel Varga}
\smallskip

\noindent
{\em Alfréd Rényi Institute of Mathematics, Reáltanoda u. 13-15, 1053,  Budapest, Hungary}

\smallskip

\noindent
e-mail address: \texttt{daniel@renyi.hu}

\medskip
\noindent
{\sc Pál Zsámboki}
\smallskip

\noindent
{\em Alfréd Rényi Institute of Mathematics, Reáltanoda u. 13-15, 1053, Budapest, Hungary}

\smallskip

\noindent
e-mail address: \texttt{zsamboki.pal@renyi.hu}

\newpage

\section{Appendix: the point set}
\vspace{-10 pt}
\begin{table}[hbt!]
\footnotesize
\begin{align*}
x_{1} &= 0 \\
x_{2} &= 1 \\
x_{3} &= \frac{1}{2} + \frac{\sqrt{3} i}{2} \\
x_{4} &= \frac{3}{2} + \frac{\sqrt{3} i}{2} \\
x_{5} &= \frac{5}{6} + \frac{\sqrt{11} i}{6} \\
x_{6} &= - \frac{\sqrt{33}}{12} + \frac{5}{12} + \frac{\sqrt{11} i}{12} + \frac{5 \sqrt{3} i}{12} \\
x_{7} &= - \frac{\sqrt{33}}{12} + \frac{5}{4} + \frac{5 \sqrt{3} i}{12} + \frac{\sqrt{11} i}{4} \\
x_{8} &= \frac{\sqrt{33}}{12} + \frac{13}{12} - \frac{\sqrt{11} i}{12} + \frac{\sqrt{3} i}{12} \\
x_{9} &= \frac{1}{4} + \frac{\sqrt{33}}{12} - \frac{\sqrt{11} i}{4} + \frac{\sqrt{3} i}{12} \\
x_{10} &= - \frac{\sqrt{33}}{12} + \frac{13}{12} - \frac{\sqrt{11} i}{12} - \frac{\sqrt{3} i}{12} \\
x_{11} &= \frac{2}{3} - \frac{\sqrt{11} i}{6} + \frac{\sqrt{3} i}{2} \\
x_{12} &= - \frac{\sqrt{33}}{12} + \frac{7}{12} - \frac{\sqrt{11} i}{12} + \frac{5 \sqrt{3} i}{12} \\
x_{13} &= - \frac{\sqrt{33}}{6} + \frac{5}{6} - \frac{\sqrt{3} i}{6} + \frac{\sqrt{11} i}{6} \\
x_{14} &= \frac{\sqrt{33}}{12} + \frac{7}{12} - \frac{\sqrt{11} i}{12} + \frac{7 \sqrt{3} i}{12} \\
x_{15} &= - \frac{\sqrt{33}}{12} + \frac{1}{12} - \frac{\sqrt{11} i}{12} - \frac{\sqrt{3} i}{12} \\
x_{16} &= - \frac{\sqrt{33}}{6} + \frac{2}{3} - \frac{\sqrt{11} i}{6} + \frac{\sqrt{3} i}{3} \\
x_{17} &= - \frac{\sqrt{33}}{12} + \frac{19}{12} - \frac{\sqrt{11} i}{12} + \frac{5 \sqrt{3} i}{12} \\
x_{18} &= \frac{1}{3} + \frac{\sqrt{11} i}{6} + \frac{\sqrt{3} i}{2} \\
x_{19} &= - \frac{\sqrt{33}}{12} + \frac{3}{4} - \frac{\sqrt{3} i}{12} + \frac{\sqrt{11} i}{4} \\
x_{20} &= - \frac{\sqrt{33}}{6} + 1 - \frac{\sqrt{3} i}{6} \\
x_{21} &= \frac{7}{6} - \frac{\sqrt{11} i}{6} \\
x_{22} &= - \frac{\sqrt{33}}{6} + \frac{4}{3} + \frac{\sqrt{11} i}{6} + \frac{\sqrt{3} i}{3} \\
x_{23} &= - \frac{\sqrt{33}}{4} + \frac{19}{12} - \frac{\sqrt{11} i}{12} + \frac{\sqrt{3} i}{4} \\
\end{align*}
\vspace{-20 pt}
\normalsize
    \caption{The point set $X_{23}$}
    \label{table:point set}
\end{table}

\begin{table}[hbt!]
$$
    \begin{array}{c|c|c|c}
    w_0 & w_T & w_1(1)  \\
    \hline
   0.378583312921677& 0.24697262945998308 &1.059383649998022
    \end{array}
    $$
    
    $$
    \begin{array}{c|c|c|c}
(i,j) & w_2(i,j) & (i,j) & w_2(i,j) \\
\hline
(1,10) & 0.014243384098949957  & (1,17) & 0.07397413460039694 \\
\hline
(1,23) & -0.008047304925038572  & (3,12) & 0.03487012105072677 \\
\hline
(3,15) & -0.0785963112357179  & (4,22) & 0.00022917246188142716 \\
\hline
(6,8) & -0.03025769554989927  & (6,15) & 0.018185030147879047 \\
\hline
(7,9) & -0.17935529642485845  & (7,12) & 0.08006137472171244 \\
\hline
(7,15) & 0.14034437164315525  & (7,21) & -0.017391357599152 \\
\hline
(7,23) & 0.09939574113576811  & (8,22) & -0.013665295941013265 \\
\hline
(8,23) & 0.017535950345541916  & (9,13) & -0.0667237004898899 \\
\hline
(9,22) & 0.02962214917215127  & (9,23) & -0.1543585559725672 \\
\hline
(11,22) & -0.050336630381192515  & (11,23) & 0.0395556964778143 \\
\hline
(12,19) & -0.01728864937672063  & (14,22) & -0.03251305281246628 \\
\hline
(14,23) & -0.07168501985142882  & (15,16) & 0.009019525024808076 \\
\hline
(21,22) & -0.08293311685123309  & (21,23) & -0.19185162418622392
\end{array}
    $$
    \caption{Nonzero dual coefficients of constraints other than (IEC)}
    \label{table:dual coefficients}
\end{table}

\end{document}